\documentclass[10pt,a4paper,reqno]{amsart}
\usepackage{amsmath,amssymb,amsfonts,amsthm,amsopn}
\usepackage{latexsym,graphicx}
\usepackage{xcolor}
\usepackage{color, colortbl}
\usepackage{pb-diagram}
\usepackage[title]{appendix}
\usepackage{tikz}
\usepackage{hyperref}
\usepackage{enumerate}
\usepackage[normalem]{ulem}
\usepackage{cancel}

\newcommand{\tends}[1]{\mbox{\space \raise-2mm
\hbox{$\textstyle\longrightarrow\atop\scriptstyle {#1}$} \space}}

\newcommand{\mcF}{\mathcal F}
\newcommand{\mcL}{\mathcal L}

\newcommand{\mcU}{\mathcal{U}}

\newcommand{\sinc}{\operatorname{sinc}}

\newcommand{\R}{\mbox{${\mathbb{R}}$}}
\newcommand{\N}{\mbox{${\mathbb{N}}$}}
\newcommand{\T}{\mbox{${\mathbb{T}}$}}
\newcommand{\Z}{\mbox{${\mathbb{Z}}$}}
\newcommand{\C}{\mbox{${\mathbb  C}$}}

\newcommand{\la}{\lambda_{\el}}

\newcommand{\Chi}{\mbox{\large${\chi}$}}

\newtheorem{theorem}{Theorem}[section]
\newtheorem{lemma}[theorem]{Lemma}
\newtheorem*{lemma*}{Lemma} 

\newtheorem{proposition}[theorem]{Proposition}
\newtheorem{definition}[theorem]{Definition}

\newtheorem{remark}[theorem]{Remark}
\theoremstyle{definition}
\newtheorem{example}[theorem]{Example}

\newcommand{\beqa}{\begin{eqnarray*}}
    \newcommand{\eeqa}{\end{eqnarray*}}

\newcommand{\field}[1]{\mathbb{#1}}
\newcommand{\bN}{\field{N}}        
\newcommand{\bZ}{\field{Z}}        
\newcommand{\bC}{\field{C}}        
\newcommand{\cL}{\mathcal{L}}     %

\newcommand{\bh}{{\mathbf h}}

\def\la{\lambda}

\def\cF{\mathcal{F}}              

\def\cH{\mathcal{H}}
\def\cB{\mathcal{B}}

\def\cU{\mathcal{U}}

\def\l{\langle}
\def\r{\rangle}
\def\<{\left<}
\def\>{\right>}

\def\mv1{M_v^1}

\def\mn{(m,n)}
\def\mn'{(m',n')}

\def\ds{\rule{0pt}{1.5ex}}

\newcommand{\rev}[1]{{\color{red}#1}}

\def\Ren{\mathbb{R}^d}

\def\f{\varphi}

\def\Sn2{S_{2}(L^{2}(\Ren))}
\def\S1{S_{1}(L^{2}(\Ren))}
\def\sig00{\sigma_{0,0}}

\def\la{\langle}
\def\ra{\rangle}

\def\bk{{\bf{k}}}
\def\bh{{\bf{h}}}

\begin{document}

\begin{abstract}
We study dynamical sampling for graph signals using the spectral theory of the
normalized graph Laplacian. Graph Paley-Wiener spaces \(GPW_\omega\) are
defined as spectral subspaces associated with a bandwidth parameter
\(\omega\), and reconstruction is studied from measurements generated by
iterates of bounded operators leaving these spaces invariant.

For the lattice graph \(\mathbb Z\), we give an explicit spectral
decomposition of the normalized Laplacian, identify the corresponding
Graph Paley--Wiener spaces with classical bandlimited spaces on the torus, and
recover the classical sampling theorem in this setting. Using frames of
operator orbits, we construct dynamical frames for \(GPW_\omega(\mathbb Z)\)
and derive reconstruction formulas. We also discuss the extension to
\(\mathbb Z^n\), where the bandwidth region is described by a sublevel set of
the Laplacian symbol.

 Finally, for finite graphs, we formulate dynamical recovery in
\(GPW_\omega(G)\) in terms of finite frames generated by iterates of the
graph operator. Using the path graph \(P_N\) as a numerical model, we study
how the number of iterates and the spatial distribution of the sampled
vertices affect the optimal frame bounds and the conditioning of the
resulting dynamical frame.
\end{abstract}

\title{
 Dynamical Sampling in Graph Paley--Wiener Spaces}
\author[I. M. Bulai]{Iulia Martina Bulai}
\address{Universit\`a di Torino, Dipartimento di Matematica, via Carlo Alberto 10, 10123 Torino, Italy}
\email{iuliamartina.bulai@unito.it}

\author[C. Cabrelli]{Carlos Cabrelli}
\address{Department of Mathematics, FCEyN-University of Buenos Aires and IMAS-CONICET Buenos Aires, Argentina}
\email{cabrelli@dm.uba.ar}

\author[E. Cordero]{Elena Cordero}
\address{Universit\`a di Torino, Dipartimento di Matematica, via Carlo Alberto 10, 10123 Torino, Italy}
\email{elena.cordero@unito.it}

\author[U. Molter]{Ursula Molter}
\address{Department of Mathematics, FCEyN-University of Buenos Aires and IMAS-CONICET Buenos Aires, Argentina}\email{umolter@dm.uba.ar}

\author[S. Saliani]{Sandra Saliani}
\address{Universit\`a di Napoli Parthenope, Dipartimento di Ingegneria, Centro Direzionale Isola C4, 80143 Napoli, Italy}
\email{sandra.saliani@uniparthenope.it}
\date{}

\keywords{Dynamical sampling, graph signal processing, Graph Paley--Wiener spaces,
normalized graph Laplacian, frames of iterations, sampling on graphs}
\subjclass[2020]{94A20, 42C15, 05C50}
\maketitle

\section{Introduction}

The classical Shannon sampling theorem provides a complete characterization
of bandlimited functions on the real line in terms of their pointwise
samples. Since its introduction, this result has served as the foundation of
sampling theory and has inspired numerous generalizations to more abstract
settings. In particular, considerable attention has recently been devoted to
sampling problems for signals defined on graphs, where the underlying graph
structure replaces the Euclidean geometry of the classical theory
\cite{AldroubiBaileyKrishtalMillerPetrosyan2024,Pesenson2001BLV,PEsensonTAMS2008,PEsensonTAMSerrata,PesensonPesenson2010,PesensonPesenson2021}.
Such developments are motivated by
applications in graph signal processing, where data are naturally associated
with the vertices of a graph and reconstruction algorithms must exploit the
topological and spectral properties of the network; see, for example,
\cite{BCPS2026}, \cite{BulaiSaliani2023}, \cite{SRV}.

An important extension of graph sampling theory incorporates the temporal
evolution of the signal. Instead of observing a graph signal only once, one
collects samples at several time instants while the signal evolves according
to a linear graph operator. This setting, known as \emph{dynamical sampling},
allows one to compensate for spatial undersampling by exploiting temporal
redundancy. Over the last decade, dynamical sampling has become an active
research area at the intersection of sampling theory, frame theory, operator
theory, and inverse problems; see, for instance,
\cite{ACNP25,ACKM2026} and the references therein.

The mathematical formulation of dynamical sampling is closely related to the
theory of frames generated by the iterates of a bounded operator, often
called \emph{dynamical frames}. Given an operator $T$ acting on a Hilbert
space, one seeks conditions under which the family
\[
\{T^n f_i:\; i\in I,\; n\ge0\}
\]
forms a frame, thereby providing stable reconstruction formulas from
spatio-temporal measurements. This point of view has led to a rich operator
theory with deep connections to spectral theory and has proved particularly
well suited for the study of evolving graph signals.

The purpose of this paper is to develop this dynamical framework in the
setting of graph signal processing. More precisely, we study the recovery of
bandlimited graph signals from samples obtained by the successive iterates of a graph operator. Our approach relies on the
theory of dynamical frames and is inspired by the operator-theoretic methods
introduced in \cite{UCACHA2017}. For the lattices $\bZ$ and $\bZ^n$ we
give explicit constructions of frames of iterations for the Graph Paley--Wiener
spaces, while for finite graphs we characterize stable recovery in terms of the
spectral properties of a graph operator and of the sampling set, and we
quantify the resulting frame bounds.

 We work with graphs \( G = (V(G), E(G)) \) that may be finite or countably infinite, and are assumed to be connected. Here, \( V(G) \) represents the set of vertices, while \( E(G) \) denotes the set of edges. Throughout, we restrict our attention to \emph{simple}, \emph{undirected}, and \emph{unweighted} graphs, meaning that there are no loops or multiple edges between vertices.

For any vertex \( v \in V(G) \), the number of vertices directly connected to \( v \) (i.e., its neighbors) is called the \emph{degree} of \( v \), denoted \( d(v) \). We assume that the degrees of all vertices are uniformly bounded above, and we define the \emph{maximum degree} of the graph by
\[
d(G) = \sup_{v \in V(G)} d(v).
\]

The space \( \ell^2(G) \) is the Hilbert space consisting of all complex-valued functions \( f : V(G) \to \mathbb{C} \), equipped with the inner product
\[
\langle f, g \rangle = \sum_{v \in V(G)} f(v) \, \overline{g(v)}
\]
and the  induced norm
\begin{equation} \label{eq:norm}
    \|f\| = \|f\|_2 = \left( \sum_{v \in V(G)} |f(v)|^2 \right)^{1/2}.
\end{equation}

The \emph{discrete {\bfseries normalized} Laplace operator} \( {\mcL} \) acting on functions \( f \in \ell^2(G) \) is defined by
\begin{equation} \label{eq:laplacian}
    {\mcL} f(v) = \frac{1}{\sqrt{d(v)}} \sum_{u \sim v} \left( \frac{f(v)}{\sqrt{d(v)}} - \frac{f(u)}{\sqrt{d(u)}} \right),
\end{equation}
where \( u \sim v \) indicates that vertices \( u \) and \( v \) are adjacent (i.e., connected by an edge).
It is known that \( {\mcL} \) defines a bounded, self-adjoint, and positive semidefinite operator on \( \ell^2(G) \), cf. \cite{PEsensonTAMS2008}; see also \cite{Olebook} for the frame-theoretic notions used below. \par
Let \( \sigma({\mcL}) \subset \R_{\geq 0}\) denote the spectrum of \( {\mcL} \). Define
\[
\lambda_{\min} = \inf_{\lambda \in \sigma({\mcL})} \lambda, \qquad \lambda_{\max} = \sup_{\lambda \in \sigma({\mcL})} \lambda.
\]

According to the spectral theorem for self-adjoint operators \cite{BS1987}, there exists a direct integral of Hilbert spaces
(see also \cite{Pesenson2001BLV} for the analogous construction of band-limited
vectors associated with a general self-adjoint, positive-definite operator)
\[
X = \int_{\sigma(\mcL)}^\oplus X(z) \, d\mu(z),
\]
where  $z \in \sigma(\mcL),$ $(X(z),  \|\;\;\|_{z})$ are Hilbert spaces, $\mu$ is the associate scalar spectral measure, and
$X$ is the set of all $x=\{x(z)\}_{z\in\sigma(\mcL)}$ with $x(z ) \in X(z) \text {  for all } z \in \sigma(\mcL)$ such that
the norm
\begin{equation*}\label{eq:normX}
    \|x\|_{X}^2 \equiv  \int_{\sigma(\mcL)} \|x(z)\|_{z}^2 \, d\mu(z) < +\infty.
\end{equation*}

Moreover, there exists  a unitary operator
\begin{equation}\label{calU}
    \mathcal{U} : \ell^2(G) \to X,
\end{equation}
such that  $\mathcal{U}$ diagonalizes the operator \({\mcL} \). Namely, if $M_z: X\to X$ is the multiplication operator by the independent variable i.e., \begin{equation}\label{eq:Multip}
M_z f(z)=z f(z),
\end{equation} we have
\begin{equation}\label{Fspettr}
    \mathcal{U}{\mcL} = M_z \mathcal{U}.
\end{equation}

This motivates the following definition.

\begin{definition}\label{DefPW}
    Let \( \omega \in [\lambda_{min},\lambda_{max}]\). A function \( f \in \ell^2(G) \) is said to belong to the \emph{Graph Paley-Wiener space} \( GPW_\omega(G) \) if its ``graph Fourier transform'' \( \mathcal{U} f \) is supported in the interval \( [0, \omega] \); that is,
    \[
    \operatorname{supp}(\mathcal{U} f) \subseteq [0, \omega].
    \]
\end{definition}

These spaces play the role of bandlimited functions on the graph.

Since the operator \( {\mcL} \) is bounded, every function \( f \in \ell^2(G) \) is contained in some Graph Paley-Wiener space \( GPW_\omega(G) \) for a suitable value \( \omega \in \sigma({\mcL}) \).  Hence,
\begin{equation} \label{eq:pw-stratification}
   \ell^2(G) = GPW_{\lambda_{\max}}(G) = \bigcup_{\omega \in \sigma({\mcL})} GPW_\omega(G), 
    \end{equation}
$$\text{ with }\quad GPW_{\omega_1}(G) \subseteq GPW_{\omega_2}(G), \quad \text{for all } \omega_1 < \omega_2.$$



This structure allows us to view the space \( \ell^2(G) \) as a union of nested Graph Paley-Wiener spaces corresponding to increasing spectral bandwidths.

Various analytical properties of the Graph Paley-Wiener spaces \( GPW_\omega(G) \) can be established, including a generalized version of the Classical Sampling Theorem adapted to the graph setting.
The latter allows us to recover $f \in
GPW_\omega(G)$ from  pointwise evaluations on a
coarser lattice, which can be advantageous in some
applications. These evaluations are the coefficients of $f$ with respect
to an orthonormal basis; such a system is perfectly conditioned, but it offers
no redundancy: the loss or corruption of a single sample cannot be compensated
by the remaining ones. See for example \cite{H1996} (chapter 11).

In practice, exact pointwise values are
rarely available, since measurements are inevitably
affected by noise. It is thus more realistic to assume
that measurements consist of inner products between the
unknown function and the elements of a suitable frame.
Redundant frames offer greater robustness to noise and
data loss, allowing for a more reliable reconstruction
of the unknown function from such measurements.

In this paper, we show how to construct
\textit{frames of iterations} of $GPW_{\omega}(\mathbb{Z})$,
using tools from dynamical sampling.



 The key problem is therefore to determine conditions for the existence of
frames of iterations for the Graph Paley--Wiener spaces $GPW_\omega(G)$: a
signal evolving under an evolution operator is typically observed only at a
coarse, spatially insufficient set of sampling locations, and the idea of
dynamical sampling is to compensate for this by collecting samples at the
same locations over multiple time instances, exploiting spatial and
temporal information jointly. This can be reformulated as determining when
the orbit of an operator generates a frame, and it leads naturally to the
construction of frames of iterations described above for $G=\bZ$ and
$G=\bZ^n$ (Section~\ref{sec:4}).
\vspace{0.1truecm}

Let us describe in more detail the contributions of the paper.
\vspace{0.1truecm}

1) For the lattice graph $\bZ$, we obtain an explicit spectral
	decomposition of the normalized graph Laplacian $\cL$ in terms of a direct
	integral of two-dimensional Hilbert spaces (Theorem~\ref{teospectr}), from
	which the Graph Paley--Wiener spaces $GPW_\omega(\bZ)$ are identified with
	classical bandlimited spaces $L^2(\Omega)$ on the torus, for a
	band $\Omega$ determined by $\omega$. As a first consequence, we recover
	the Classical Sampling Theorem on $\bZ$ as a particular case of this
	identification (Theorem~\ref{ShannotheoremZ}): every $f\in GPW_\omega(\bZ)$
	with $\omega=\varphi(1/2\ell)$ can be reconstructed from its values on the
	sub-lattice $\ell\bZ$ via a sinc-interpolation formula.
\vspace{0.1truecm}

2)	 Building on this spectral identification and on the characterization
	of frames of iterations of a normal operator recalled in
	Theorem~\ref{TeorCarl} (from \cite{UCACHA2017}), we construct explicit
	\emph{dynamical frames} for $GPW_\omega(\bZ)$: for every uniformly separated
	sequence $\Lambda=(\lambda_j)_{j}$ in the unit disc and every suitable
	$a\in L^2(\Omega)$ we build a bounded operator $R$ on $\ell^2(\bZ),$ leaving $GPW_\omega(\bZ)$ invariant, such that $(R^nf_0)_{n\ge0}$ is a frame for
	$GPW_\omega(\bZ)$ (Theorems~\ref{main} and~\ref{main2}), together with the
	associated reconstruction formula \eqref{RC}. This shows that graph signals
	in $GPW_\omega(\bZ)$ can be stably recovered from purely dynamical
	measurements $\{\langle f, R^nf_0\rangle\}_{n\ge0}$, obtained by
	testing the evolved signal $(R^*)^nf$ against one fixed sensor $f_0$ at every
	time step. 

\vspace{0.1truecm}
3) We extend this construction to the lattice $\bZ^n$, where the
	Graph Paley--Wiener spaces are described by a sublevel set $\Omega_n$ of the
	Laplacian symbol $\tfrac{2}{n}\sum_{j=1}^n\sin^2(\pi\xi_j)$. Since $\Omega_n$
	need not be a nice domain for general $\omega$, we develop a general method
	that transports an orthonormal basis of a simpler reference domain $D$ onto
	$\Omega_n$ through a piecewise translation, and we carry
	out this construction explicitly for $n=2$ and $\omega=1$, where
	$\Omega_2$ is the diamond $|\xi_1|+|\xi_2|\le 1/2$.
	\vspace{0.1truecm}
	
	4)  We formulate a finite-dimensional counterpart of the whole theory
	(Section~\ref{sec:finite}). For a finite graph $G$ with normalized
	Laplacian $\mathcal L$ and dynamics $R=h(\mathcal L)$, we express the
	dynamical samples of a signal in $GPW_\omega(G)$ as the rows of an explicit
	analysis matrix $\Theta_{S,K}$, and we show that stable recovery is
	equivalent to a finite-frame condition for this matrix, with optimal
	frame bounds $A_{S,K}$, $B_{S,K}$ given by its extremal singular values.
	
	 Using the path graph $P_N$ as a numerical model, we quantify how the
	number of iterates $K$ and the spatial distribution of the sampling set $S$
	affect $A_{S,K}$, $B_{S,K}$, and the condition number
	$B_{S,K}/A_{S,K}$. We show, in particular, that a spatially distributed
sampling set (a purely spatial benchmark) can produce an exactly tight frame already at $K=1$, whereas
sampling repeatedly at a single vertex reaches full rank only after $K=m=\dim GPW_\omega$ iterates, as predicted by Theorem~\ref{DS}, and remains, even then, several orders of magnitude worse
conditioned than sampling at two well-separated vertices with the same
total number of measurements. These experiments illustrate that, in the
	finite-dimensional setting, the spatial distribution of the sampling
	vertices is at least as important as the number of temporal samples for
	the stability of the reconstruction.

\vspace{0.1truecm}	

The paper is organized as follows. Section~\ref{sec:preliminaries} recalls
the basic notions on frames, the Classical Sampling Theorem, and frames of
operator orbits, including the characterization of Theorem~\ref{TeorCarl}
from \cite{UCACHA2017} on which our constructions rely.
Section~\ref{sec:4} is devoted to the case $G=\bZ$: we describe the spectral
decomposition of the normalized Laplacian, identify the corresponding Graph
Paley--Wiener spaces, recover the Classical Sampling Theorem, and construct
the dynamical frames and reconstruction formulas summarized above; the
extension to $\bZ^n$ is discussed in Section~\ref{subsec:zn}. Finally,
Section~\ref{sec:finite} treats finite graphs: it develops the
finite-dimensional analogue of dynamical frames via the analysis matrix
$\Theta_{S,K}$ and the finite-frame characterization of
Theorem~\ref{DS}, and illustrates the theory with the numerical
experiments on the path graph $P_N$ described above.

\section{Preliminaries}\label{sec:preliminaries}


    Let $\mathcal{H}$ be a separable Hilbert space and let $I$ be an at most countable index set.
    A sequence $\{f_n\}_{n \in I} \subset \mathcal{H}$ is called a \textbf{frame} for $\mathcal{H}$ if there exist constants $A, B > 0$ such that for all $f \in \mathcal{H}$,
    \[
    A \|f\|^2 \leq \sum_{n \in I} |\langle f, f_n \rangle|^2 \leq B \|f\|^2.
    \]
    The constants $A$ and $B$ are called the \emph{frame bounds}.
    Let $\{f_n\}_{n \in I}$ be a frame for $\mathcal{H}$. A sequence $\{g_n\}_{n \in I} \subset \mathcal{H}$ is called a \textbf{(canonical) dual frame} of $\{f_n\}$ if for every $f \in \mathcal{H}$,
    \[
    f = \sum_{n \in I} \langle f, f_n \rangle g_n = \sum_{n \in I} \langle f, g_n \rangle f_n.
    \]
    The canonical dual frame is given by $ g_n = S^{-1} f_n$,
    where $S$ is the frame operator defined by
    \[
    Sf = \sum_{n \in \mathbb{N}} \langle f, f_n \rangle f_n.
    \]

    \textbf{Reconstruction Formula}
    Given a frame $\{f_n\}_{n \in \mathbb{N}}$ and its canonical dual frame $\{g_n\}_{n \in \mathbb{N}}$, any vector $f \in \mathcal{H}$ can be reconstructed via
    \[
    f = \sum_{n \in I} \langle f, f_n \rangle g_n = \sum_{n \in I} \langle f, g_n \rangle f_n.
    \]

\subsection{Classical Sampling Theorem}
Let $u$ be a positive real number. A function \( f \in L^2(\mathbb{R}) \) belongs to the   Paley-Wiener class \( PW_u(\mathbb{R}) \) and is called \(u\)-\emph{bandlimited} if its \(L^2\)-Fourier transform
    \[
    \hat{f}(\xi) = \int_{-\infty}^{+\infty} f(x) e^{-2\pi i x \xi} \, dx
    \]
    has support in \([-u, u]\), i.e., \( \hat{f}(\xi) = 0 \) for almost all \( |\xi| > u \).

        \begin{theorem}[Paley-Wiener Theorem]
            Let \( f \in L^2(\mathbb{R}) \). The following statements are equivalent:
            \begin{enumerate}
                \item $f\in PW_u(\mathbb{R}).$

                \item The function \( f \) extends to an entire function on \( \mathbb{C} \) and satisfies the growth condition
                \[
                |f(z)| \leq C e^{2\pi u |\operatorname{Im}(z)|}, \quad \text{for all } z \in \mathbb{C},
                \]
                for some constant \( C > 0 \).
            \end{enumerate}
        \end{theorem}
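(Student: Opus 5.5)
The argument goes through Fourier inversion for (1)$\Rightarrow$(2) and through Cauchy's theorem with a contour shift for (2)$\Rightarrow$(1).

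\emph{(1)$\Rightarrow$(2).} Let $f\in PW_u(\mathbb{R})$. Since $\hat f$ is supported in $[-u,u]$ and lies in $L^2$, it is also in $L^1([-u,u])$ by Cauchy--Schwarz. Fourier inversion therefore gives, for almost every $x$,
\[
f(x)=\int_{-u}^{u}\hat f(\xi)e^{2\pi i x\xi}\,d\xi .
\]
I will use the right-hand side with $x$ replaced by $z\in\mathbb{C}$ to define $F(z)$. Because the domain of integration is compact, differentiation under the integral sign (or Morera's theorem together with Fubini) shows that $F$ is entire. Moreover $|e^{2\pi i z\xi}|=e^{-2\pi \xi\operatorname{Im} z}\le e^{2\pi u|\operatorname{Im} z|}$ for $|\xi|\le u$. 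Hence
\[
|F(z)|\le \|\hat f\|_{L^1}\,e^{2\pi u|\operatorname{Im} z|}\le \sqrt{2u}\,\|f\|_2\,e^{2\pi u|\operatorname{Im} z|}.
\]

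\emph{(2)$\Rightarrow$(1).} Let $f$ be entire with $|f(z)|\le Ce^{2\pi u|\operatorname{Im} z|}$ and with $f|_{\mathbb{R}}\in L^2$. We must show $\hat f(\xi)=0$ for $|\xi|>u$. The plan has three steps.

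First, regularize so that the function is integrable on horizontal lines. Put $f_\varepsilon(z)=f(z)\bigl(\tfrac{\sin(\pi\varepsilon z)}{\pi\varepsilon z}\bigr)^2$, which is entire of exponential type $u+\varepsilon$ in the same sense. On the real axis $f_\varepsilon\to f$ in $L^2$ as $\varepsilon\to0$. Since $f$ is bounded on $\mathbb{R}$, $f_\varepsilon|_{\mathbb{R}}$ lies in $L^1$, and the factor $1/z^2$ gives decay along every horizontal line.

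Second, show $\widehat{f_\varepsilon}(\xi)=0$ for $\xi>u+\varepsilon$ by a contour shift. Write
\[
\widehat{f_\varepsilon}(\xi)=\int_{\mathbb{R}} f_\varepsilon(x)e^{-2\pi i x\xi}\,dx .
\]
Apply Cauchy's theorem on rectangles to move the line of integration to $\operatorname{Im} z=-y$ with $y>0$. On that line the integrand is bounded by $|f_\varepsilon(x-iy)|\,e^{-2\pi y\xi}\lesssim e^{2\pi(u+\varepsilon)y}e^{-2\pi y\xi}(1+|x|)^{-2}$. Letting $y\to\infty$ forces $\widehat{f_\varepsilon}(\xi)=0$. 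The case $\xi<-u-\varepsilon$ is symmetric, shifting upward instead.

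Third, pass to the limit: $\widehat{f_\varepsilon}\to\hat f$ in $L^2$ by Plancherel, so $\operatorname{supp}\hat f\subseteq[-u,u]$.

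The main obstacle is justifying the contour shift. The growth bound alone gives no decay in $|\operatorname{Re} z|$, so the vertical sides of the rectangles need not vanish. I would handle this with the Phragmén--Lindelöf principle applied on the half-plane $\{\operatorname{Im} z\le 0\}$ to $g(z)=f_\varepsilon(z)e^{-2\pi i(u+\varepsilon)z}$. This function is bounded on the real axis: $f$ is bounded there, since $f$ has exponential type, $f|_{\mathbb{R}}\in L^2$, and the Plancherel--Pólya estimate applies. By the hypothesis, $g$ has exponential growth of order at most $1$ in the half-plane. Phragmén--Lindelöf then gives the uniform bound $|f_\varepsilon(x-iy)|\lesssim e^{2\pi(u+\varepsilon)y}/(1+|x|^2)$ in the half-plane. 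This uniform bound makes the vertical sides of the rectangles vanish as their horizontal extent tends to infinity, which completes the contour argument.
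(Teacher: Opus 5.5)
The paper quotes this theorem as classical background and gives no proof, so there is no argument of the paper's to compare yours with. Your proof is the standard one and is correct in substance: Fourier inversion for (1)$\Rightarrow$(2), then regularization, a contour shift and an $L^2$ limit for (2)$\Rightarrow$(1). Your (1)$\Rightarrow$(2), your regularization, the estimate on the shifted line, and the passage to the limit all hold. Only your final paragraph needs to be replaced.

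The ``main obstacle'' you flag is not there. First, boundedness of $f$ on $\mathbb{R}$ is hypothesis (2) at $\operatorname{Im} z=0$, so Plancherel--P\'olya is not needed. Second, from $\frac{\sin w}{w}=\int_0^1\cos(tw)\,dt$ you get $\bigl|\frac{\sin w}{w}\bigr|\le e^{|\operatorname{Im} w|}$, and also $|\sin w|\le e^{|\operatorname{Im} w|}$. Combined with (2), these give directly
\[
|f_\varepsilon(z)|\le C\,e^{2\pi(u+\varepsilon)|\operatorname{Im} z|}\min\bigl\{1,(\pi\varepsilon|z|)^{-2}\bigr\},\qquad z\in\mathbb{C}.
\]
This bound is uniform on horizontal strips. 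For $\xi>0$ and $t\le 0$ we have $e^{2\pi t\xi}\le 1$, so the vertical sides at $\operatorname{Re} z=\pm X$ contribute $O\bigl(y\,e^{2\pi(u+\varepsilon)y}X^{-2}\bigr)\to0$ as $X\to\infty$; the case $\xi<0$ is symmetric. The same bound gives the $(1+|x|)^{-2}$ estimate, with constant independent of $y$, that you use on the line $\operatorname{Im} z=-y$. This is just your own remark that ``the factor $1/z^2$ gives decay along every horizontal line,'' made uniform in $\operatorname{Im} z$.

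The Phragm\'en--Lindel\"of step, as you phrase it, does not deliver what you claim, for three reasons.
\begin{enumerate}
\item The function $g(z)=f_\varepsilon(z)e^{-2\pi i(u+\varepsilon)z}$ is already bounded by $C$ on $\{\operatorname{Im} z\le 0\}$, straight from (2), so the principle adds nothing.
\item Applied to $g$, it only yields $|f_\varepsilon(x-iy)|\le \sup_{\mathbb{R}}|f_\varepsilon|\,e^{2\pi(u+\varepsilon)y}$. That bound lacks the factor $(1+|x|^2)^{-1}$; to get it you would have to apply the principle to $(z-i)^2 g(z)$ instead.
\item ``Exponential growth of order at most $1$'' is not a sufficient hypothesis for Phragm\'en--Lindel\"of on a half-plane. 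For example, $e^{iz}$ is bounded on $\mathbb{R}$ and of exponential type, yet unbounded on $\{\operatorname{Im} z\le 0\}$.
\end{enumerate}
The principle is genuinely needed only in the variant of the theorem whose hypothesis is the weaker bound $|f(z)|\le Ce^{2\pi u|z|}$, where one must first upgrade it to a bound in $|\operatorname{Im} z|$. Here that bound is given outright.
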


Functions \(f\in {PW}_{u}(\mathbb{R}) \)  are often called \emph{Paley-Wiener functions}.
    \begin{theorem}[Classical Sampling Theorem (CST)]\label{ShannonT}
        Let \(f\in PW_u(\mathbb{R}) \).
        Then \( f \) is completely determined by its samples at the points \( \left\{ \frac{j}{2u} \right\}_{j \in \mathbb{Z}} \), and can be reconstructed by the formula
        \begin{equation}\label{shannon}
        f(x) = \sum_{j \in \mathbb{Z}} f\left( \frac{j}{2u} \right) \cdot \frac{\sin\left(2\pi
         u\left(x - \frac{j}{2u}\right)\right)}{2\pi u\left(x - \frac{j}{2u}\right)}= \sum_{j \in \mathbb{Z}} f\left( \frac{j}{2u} \right) \cdot \operatorname{sinc}\left(2u\left(x - \frac{j}{2u}\right)\right)
        \end{equation}
        where the \emph{sinc function} is defined by
        \begin{equation}\label{eq:sinc}
        \operatorname{sinc}(x) = \frac{\sin(\pi x)}{\pi x},\quad x\not=0,\,\,\mbox{    and }\,\,\operatorname{sinc}(0)=1.
        \end{equation}
 The series converges in the \(L^2(\mathbb{R})\)-sense and uniformly.
    \end{theorem}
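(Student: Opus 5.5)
The plan is the standard Fourier-analytic proof. Orthonormal-basis facts for $L^2$ of an interval, together with Plancherel, will do everything.

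\textbf{Step 1 (periodization and Fourier series).} Let $f\in PW_u(\mathbb{R})$, so $\hat f\in L^2[-u,u]$. The exponentials $\{(2u)^{-1/2}e^{-2\pi i j\xi/(2u)}\}_{j\in\mathbb{Z}}$ form an orthonormal basis of $L^2[-u,u]$. Since $\hat f\in L^1[-u,u]$, Fourier inversion gives
\[
f(x)=\int_{-u}^{u}\hat f(\xi)e^{2\pi i x\xi}\,d\xi .
\]
This holds everywhere for the continuous representative of $f$. The Fourier coefficients are therefore
\[
\langle \hat f,(2u)^{-1/2}e^{-2\pi i j\cdot/(2u)}\rangle=(2u)^{-1/2}f\!\left(\tfrac{j}{2u}\right).
\]
Hence, in $L^2[-u,u]$,
\[
\hat f=\frac{1}{2u}\sum_{j\in\mathbb{Z}} f\!\left(\tfrac{j}{2u}\right)e^{-2\pi i j\xi/(2u)}.
\]
In particular, if all samples vanish then $\hat f=0$, which proves that $f$ is determined by its samples. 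Parseval also gives $\sum_j|f(j/2u)|^2=2u\|f\|_2^2<\infty$.

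\textbf{Step 2 (transfer back to $\mathbb{R}$).} Multiply the expansion by $\chi_{[-u,u]}$ and apply the inverse Fourier transform, which is unitary on $L^2(\mathbb{R})$. A direct computation shows that the inverse transform of $\frac{1}{2u}\chi_{[-u,u]}(\xi)e^{-2\pi i j\xi/(2u)}$ is
\[
\frac{\sin(2\pi u(x-j/2u))}{2\pi u(x-j/2u)}=\operatorname{sinc}\bigl(2u(x-\tfrac{j}{2u})\bigr).
\]
Because the exponential series converges in $L^2[-u,u]$, unitarity of the inverse transform yields convergence of \eqref{shannon} in $L^2(\mathbb{R})$. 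Equivalently, the functions $\sqrt{2u}\,\operatorname{sinc}(2u(\cdot-j/2u))$ form an orthonormal basis of $PW_u(\mathbb{R})$.

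\textbf{Step 3 (uniform convergence).} This is the only step needing extra care. Let $S_N$ denote the partial sums of \eqref{shannon}. Since $f-S_N$ is bandlimited, the inversion formula and Cauchy--Schwarz give
\[
\sup_x|f(x)-S_N(x)|\le\int_{-u}^{u}|\hat f-\widehat{S_N}|\,d\xi\le\sqrt{2u}\,\|\hat f-\widehat{S_N}\|_{L^2[-u,u]}.
\]
The right-hand side tends to $0$ by Step 1, so the convergence is uniform on $\mathbb{R}$. The same bound also works for symmetric or arbitrary exhausting partial sums.

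The main point requiring attention is justifying pointwise evaluation of the samples. $L^2$ functions are only defined a.e., so one must use the continuous representative furnished by the inversion formula (or by the Paley--Wiener theorem). This is exactly what makes the identification of the Fourier coefficients with $f(j/2u)$ in Step 1 legitimate.
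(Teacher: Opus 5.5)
Your proof is correct and complete. It handles both points that need care on $\mathbb{R}$: you use the continuous representative $f(x)=\int_{-u}^{u}\hat f(\xi)e^{2\pi i x\xi}\,d\xi$ so that the samples are meaningful, and you use the bound $\|f-S_N\|_\infty\le\sqrt{2u}\,\|\hat f-\widehat{S_N}\|_{L^2[-u,u]}$ to get uniform convergence. The paper does not prove this classical theorem (it is only recalled in the preliminaries), but your argument is the same template it uses for the discrete version, Theorem~\ref{ShannotheoremZ}: expand the transform in the exponential orthonormal basis of the band, identify the coefficients with the samples via Fourier inversion, and invert the transform of $\chi_{[-u,u]}e^{-2\pi i j\xi/(2u)}$ to obtain the sinc kernels.
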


        \subsection{Frames of orbits}

We conclude this section by recalling a result on the existence and
construction of frames of iterations for bounded \emph{normal} operators,
the class of operators considered throughout this paper (see
\cite{UCACHA2017}). For a general treatment of frames of iterations, we
refer the reader to \cite{ACNP25} and the references therein, while a
comprehensive survey of the subject is provided in \cite{ACKM2026}.

 We begin by introducing some notation and recalling a few definitions. Given two nonnegative sequences
\((a_j)_{j\in\mathbb{N}}\) and \((b_j)_{j\in\mathbb{N}}\), we write
\[
a_j \sim b_j
\]
if there exist constants \(c,C>0\) such that
\[
c\,a_j \le b_j \le C\,a_j,
\qquad j\in\mathbb{N}.
\]

A sequence \(\Lambda=(\lambda_j)_{j\in\mathbb{N}}\) in the open unit disk
\[
\mathbb{D}=\{z\in\mathbb{C}:|z|<1\}
\]
is said to be \emph{uniformly separated} if
\begin{equation}\label{carleson-cond}
\delta_\Lambda
:=
\inf_{j\in\mathbb{N}}
\prod_{k\neq j}
\left|
\frac{\lambda_j-\lambda_k}
{1-\overline{\lambda_k}\lambda_j}
\right|
>0.
\end{equation}
Condition~\eqref{carleson-cond} is known as the \emph{Carleson condition}.

Next, we recall Definition~12.10 from~\cite{GMRbook}.

\begin{definition}
A sequence \(\{\lambda_n\}_{n\ge1}\subset\mathbb{D}\) is called \emph{exponential} if there exists a constant
\(0<c<1\) such that
\[
1-|\lambda_{n+1}|
\le
c\bigl(1-|\lambda_n|\bigr),
\qquad n\ge1.
\]
\end{definition}

We shall also use the following result (see Proposition~12.11 in~\cite{GMRbook}).

\begin{proposition}
Every exponential sequence is uniformly separated.
\end{proposition}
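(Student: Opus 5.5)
We need to prove that every exponential sequence is uniformly separated. The idea is to bound each factor of the Carleson product from below and then control the whole infinite product. Write $r_n=|\lambda_n|$ and $\varepsilon_n=1-r_n$. The exponential condition gives $\varepsilon_{n+1}\le c\,\varepsilon_n$, and iterating yields $\varepsilon_k\le c^{k-j}\varepsilon_j$ for $k\ge j$.

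\textbf{Step 1: reduce to moduli.} The pseudo-hyperbolic distance satisfies
\[
\left|\frac{\lambda_j-\lambda_k}{1-\overline{\lambda_k}\lambda_j}\right|\ \ge\ \frac{|r_j-r_k|}{1-r_jr_k},
\]
since the pseudo-hyperbolic distance is at least that between the radial projections. This follows from the identity $1-\rho^2=\frac{(1-|a|^2)(1-|b|^2)}{|1-\bar a b|^2}$ together with $|1-\bar ab|\ge 1-|a||b|$. So it suffices to show
\[
\inf_j\prod_{k\ne j}\frac{|r_j-r_k|}{1-r_jr_k}>0 .
\]

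\textbf{Step 2: bound the factors for $k>j$.} Put $t=\varepsilon_k/\varepsilon_j\le c^{k-j}$. Then $r_k-r_j=\varepsilon_j-\varepsilon_k$ and $1-r_jr_k\le\varepsilon_j+\varepsilon_k$. Hence the factor is at least
\[
\frac{1-t}{1+t}\ \ge\ \frac{1-c^{k-j}}{1+c^{k-j}} .
\]
The case $k<j$ is symmetric, with $t=\varepsilon_j/\varepsilon_k\le c^{j-k}$. Consequently
\[
\prod_{k\ne j}\ \ge\ \left(\prod_{m\ge1}\frac{1-c^m}{1+c^m}\right)^{2},
\]
a bound independent of $j$.

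\textbf{Step 3: show the product is positive.} The product $\prod_{m\ge1}\frac{1-c^m}{1+c^m}$ is strictly positive, because $\sum_m c^m<\infty$ and every factor lies in $(0,1)$ (here $0<c<1$, so no factor vanishes). This gives $\delta_\Lambda>0$.

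\textbf{Main obstacle.} The delicate step is Step 1, the inequality relating the pseudo-hyperbolic distance to the radial quantities. I would prove it by squaring: the claim becomes
\[
\frac{(1-|a|^2)(1-|b|^2)}{|1-\bar ab|^2}\ \le\ \frac{(1-|a|^2)(1-|b|^2)}{(1-|a||b|)^2},
\]
which holds because $|1-\bar ab|\ge1-|a||b|$. A secondary point is that the exponential condition forces $r_n\to1$ with $\varepsilon_n$ strictly decreasing, so $r_j\neq r_k$ for $j\ne k$. This also makes the sequence distinct, so no factor vanishes.
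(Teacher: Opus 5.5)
Your proof is correct. The three ingredients combine to give $\delta_\Lambda\ge\bigl(\prod_{m\ge1}\frac{1-c^m}{1+c^m}\bigr)^2>0$: the radial comparison $\bigl|\frac{\lambda_j-\lambda_k}{1-\overline{\lambda_k}\lambda_j}\bigr|\ge\frac{|r_j-r_k|}{1-r_jr_k}$, the factor bound $\frac{1-c^{|k-j|}}{1+c^{|k-j|}}$ obtained from $1-r_jr_k\le\varepsilon_j+\varepsilon_k$, and the positivity of $\prod_{m\ge1}\frac{1-c^m}{1+c^m}$. The paper gives no proof of its own and simply cites \cite[Proposition~12.11]{GMRbook}; your argument is the standard one for this fact, so there is nothing to compare.
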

The previous result allows to construct uniformly separated sequences easily, as shown below.
    \begin{example}\label{ex:unif}
        An example of uniformly separated sequence is given by $(\lambda_n)_{n\geq 1}$, with $\lambda_n=1-e^{-n}$. Then $\lambda_{n+1}=1-e^{-n-1}$, and
        $$1-\lambda_{n+1}=e^{-n-1}=\frac{1}{e}e^{-n}=\frac{1}{e}(1-\lambda_n),$$
        as desired.
    \end{example}

   The following result will be the key tool for our main result. The
characterization of frames of iterations of normal operators was obtained in
\cite[Theorem~3.16]{UCACHA2017} and \cite{AP17} (see \cite{CabrelliMolterPaternostroPhilipp} for a more
general class of operators); the explicit frame bounds below are due to
\cite[Theorem~5.1]{CF2019}. Here $\bN=\{0,1,2,\dots\}$.
\begin{theorem}[Theorem 5.1 in \cite{CF2019}]\label{TeorCarl}
        Let \( T \in \mathcal{B}(\mathcal{H}) \) be a normal operator and let \( f_0 \in \mathcal{H} \). Then \( (T^n f_0)_{n \in \mathbb{N}} \) is a frame for \( \mathcal{H} \) if and only if
        \begin{equation}\label{diag}
        T = \sum_{j=0}^\infty \lambda_j \langle \cdot, e_j \rangle e_j,
        \end{equation}
        where \( (\lambda_j)_{j \in \mathbb{N}} \subset \mathbb{D} \) is uniformly separated, \( (e_j)_{j \in \mathbb{N}} \) is an orthonormal basis for \( \mathcal{H} \), and
        \begin{equation}\label{coeff}
        |\langle f_0, e_j \rangle|^2 \sim 1 - |\lambda_j|^2.
        \end{equation}
        In this case, the frame \( (T^n f_0)_{n \in \mathbb{N}} \) has frame bounds \( \alpha \Delta^{-1} \) and \( \beta \Delta \), where
        \[
        \alpha := \inf_{j \in \mathbb{N}} \frac{|\langle f_0, e_j \rangle|^2}{1 - |\lambda_j|^2}, \qquad
        \beta := \sup_{j \in \mathbb{N}} \frac{|\langle f_0, e_j \rangle|^2}{1 - |\lambda_j|^2}, \qquad
        \Delta := \frac{2}{\delta_\Lambda^4} \left(1 - 2 \log \delta_\Lambda \right)
        \]
        and $\delta_\Lambda$ is the uniform bound defined in \eqref{carleson-cond}.
\end{theorem}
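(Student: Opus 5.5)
Theorem~\ref{TeorCarl} is an equivalence, so I will prove the two implications separately and then obtain the frame bounds from the sufficiency argument.

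\emph{Sufficiency.} Assume \eqref{diag} and \eqref{coeff} hold, and put $c_j=\langle f_0,e_j\rangle$. For $f\in\mathcal H$ set $a_j=\langle f,e_j\rangle$. Then
\[
\langle f,T^nf_0\rangle=\sum_j a_j\,\overline{c_j}\,\overline{\lambda_j}^{\,n}.
\]
Hence the analysis operator sends $f$ to the sequence of Taylor coefficients of
\[
F(z)=\sum_j \frac{a_j\overline{c_j}}{1-\overline{\lambda_j}z}=\sum_j a_j\overline{c_j}\,k_{\lambda_j}(z),
\]
where $k_\lambda$ is the Szeg\H{o} kernel of $H^2(\mathbb D)$. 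Consequently
\[
\sum_n|\langle f,T^nf_0\rangle|^2=\|F\|_{H^2}^2 .
\]
Let $\kappa_j=\sqrt{1-|\lambda_j|^2}\,k_{\lambda_j}$ be the normalized kernels. Then $F=\sum_j a_j\,\overline{c_j}\,(1-|\lambda_j|^2)^{-1/2}\kappa_j$. By Carleson's interpolation theorem, uniform separation of $\Lambda$ is equivalent to $(\kappa_j)$ being a Riesz sequence in $H^2$. Its Riesz bounds are controlled by $\delta_\Lambda$, and the quantitative form gives the constants $\Delta^{-1}$ and $\Delta$. Therefore $\|F\|^2$ is comparable to $\sum_j|a_j|^2|c_j|^2/(1-|\lambda_j|^2)$, and this quantity lies between $\alpha\|f\|^2$ and $\beta\|f\|^2$. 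This yields the stated bounds $\alpha\Delta^{-1}$ and $\beta\Delta$.

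\emph{Necessity.} Assume $(T^nf_0)$ is a frame.
\begin{enumerate}
\item I first show $\|T\|\le1$. If $\|T^*\|>1$, then $T^*$ has spectrum outside the closed disc, because $T^*$ is normal. By the spectral theorem I can then choose $f$ in a spectral subspace where $|T^*|>1+\epsilon$. Along such $f$ the quantities $|\langle (T^*)^nf,f_0\rangle|$ cannot be square summable unless $f\perp$ the relevant part of the cyclic subspace. This contradicts the lower frame bound.
\item I next show the spectral measure is purely atomic, with all atoms in the open disc. A frame of orbits forces $f_0$ to be cyclic, and it forces $T^*$ to be similar to a restriction of the backward shift on $\ell^2$ (model theory via the analysis operator). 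In particular $(T^*)^n\to0$ strongly. This rules out spectral mass on the circle. Continuous mass inside the disc is excluded by comparing with the measure $\mu$ of $f_0$. The frame property makes $L^2(\mu)$ equivalent to a space of analytic functions in which point evaluations at $\operatorname{supp}\mu$ are bounded. This forces $\mu$ to be atomic.
\item Cyclicity of $f_0$ then gives simple eigenvalues, so $T$ takes the form \eqref{diag}.
\item Finally, test the frame inequality on $f=e_j$. This gives $\sum_n|c_j|^2|\lambda_j|^{2n}=|c_j|^2/(1-|\lambda_j|^2)\in[A,B]$, which is \eqref{coeff}.
\item Carleson's theorem, applied now in the converse direction, gives uniform separation. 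The reason is that the analysis map identifies $\sum a_j\kappa_j$-norms with $\ell^2$-norms of the coefficients.
\end{enumerate}

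The main obstacle is the necessity step showing that the spectrum is discrete, that is, ruling out continuous spectral measure. I would try to handle it through the $H^2$ model: the analysis operator intertwines $T^*$ with the backward shift. This identifies $\mathcal H$ with a backward-shift-invariant subspace $K$ of $H^2$ on which multiplication by $z$, compressed to $K$, is normal. I would then argue that such subspaces are spanned by Szeg\H{o} kernels at the eigenvalues. The explicit constant $\Delta$ comes from the sharp Carleson constant estimates in \cite{CF2019}, which I would cite rather than rederive.
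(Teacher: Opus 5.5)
\textbf{Assessment.} The paper does not prove Theorem~\ref{TeorCarl}; it quotes it from \cite{UCACHA2017}, \cite{AP17} and, for the constants, \cite{CF2019}. Your sufficiency half is the standard argument behind those references: the analysis map sends $f$ to the Taylor coefficients of $\sum_j a_j\overline{c_j}k_{\lambda_j}$, and uniform separation is equivalent to the normalized Szeg\H{o} kernels forming a Riesz sequence. Taking $\Delta$ from \cite{CF2019} is exactly what the paper does, and your necessity steps 3--5 are correct.

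\textbf{The gap.} The genuine gap is the one you flag yourself: you never show that the spectral measure of $T$ is atomic. The sketch in step 2 does not do it. Write $C$ for the analysis operator and $S^*$ for the backward shift. Under the embedding of $\mathcal H\cong L^2(\mu)$ into $H^2$ given by $C$, namely $g\mapsto\int g(z)(1-\overline{z}w)^{-1}\,d\mu(z)$, evaluation at each $w\in\mathbb{D}$ is bounded for \emph{every} finite measure on the closed disc. So bounded point evaluations carry no information there. To make your idea work, you need the restriction map $H^2\to L^2(\mu)$, which is the adjoint of this embedding, to be onto; this is the dual form of the lower frame bound. Only then does each $g$ have an analytic representative equal to $g$ $\mu$-a.e. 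Testing on $\mu(Q)^{-1/2}\chi_Q$ for small discs $Q$ centred at $w\in\operatorname{supp}\mu$ then gives $\mu(\{w\})>0$.

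The model-space fallback does not help either. The compression of the shift to $K=\operatorname{ran}C$ is only \emph{similar} to $T$, not normal: the eigenvectors $k_{\lambda_j}$ of its adjoint are not orthogonal. Moreover, showing that $K$ is spanned by Szeg\H{o} kernels amounts to ruling out a singular inner factor, which is the claim itself.

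\textbf{How to close it.} A short argument using only the frame inequalities suffices. Let $E$ be the spectral measure of $T$ and $A,B$ the frame bounds.
\begin{itemize}
\item \emph{No mass on $\{|z|\ge1\}$.} If $f\in E(\{|z|\ge1\})\mathcal H$, then $\|(T^*)^kf\|\ge\|f\|$. Applying the lower bound to $(T^*)^kf$ gives $A\|f\|^2\le\sum_{n\ge k}|\langle f,T^nf_0\rangle|^2\to0$, so $E(\{|z|\ge1\})=0$. This replaces your step 1, whose claim that the coefficients cannot be square summable is not justified as stated. Alternatively, the intertwining $CT^*=S^*C$ gives $\|(T^*)^n\|\le\sqrt{B/A}$ directly.
\item \emph{Finite rank inside the disc.} Fix $r<1$ and suppose $E_r=E(\{|z|\le r\})$ has infinite rank; pick an orthonormal sequence $(f_k)$ in its range. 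Then $|\langle f_k,T^nf_0\rangle|=|\langle f_k,T^nE_rf_0\rangle|\le r^n\|f_0\|$. For each fixed $n$ this tends to $0$ as $k\to\infty$, since $f_k\to0$ weakly. Dominated convergence gives $\sum_n|\langle f_k,T^nf_0\rangle|^2\to0$, contradicting the lower bound $A$.
\end{itemize}
Thus every $E_r\mathcal H$ is finite dimensional. Hence $\mathcal H$ has an orthonormal basis of eigenvectors of $T$, with eigenvalues in $\mathbb{D}$ accumulating only on the unit circle, and your steps 3--5 finish the proof.
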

    \begin{remark}
        Observe that condition \eqref{coeff} and $f_0\in\cH$ imply
    $$1 - |\lambda_j|^2\sim |\langle f_0, e_j \rangle|^2  \to 0, \,\qquad j\to\infty$$
    hence $|\lambda_j|\to 1$  for  $j\to\infty.$
    \end{remark}

    \section{The case \texorpdfstring{$G=\bZ$}{G=Z}}\label{sec:4}
    \subsection{The spectral decomposition of the normalized Laplacian}

    Consider the case of the Cayley graph of the additive abelian group \( \mathbb{Z} \). By abuse of notation we shall write $G=\bZ.$
In this setting $d(v)=2$, for every $v\in\bZ$ so that $d(G)=2$.
Using \eqref{eq:laplacian} we can compute the normalized Laplacian of a function $f \in \ell^2(\bZ)$, given by
\begin{equation}\label{eq:normlap}
\mathcal{\mcL}f(n)=\frac{1}{\sqrt{2}}\sum_{m\sim n} \left(\frac{f(n)}{\sqrt{2}}-\frac{f(m)}{\sqrt{2}}\right)=\frac12 \left(2f(n)-f(n+1)-f(n-1)\right),\quad n\in\bZ.
\end{equation}
    The Pontryagin dual of the  group \( \mathbb{Z} \) is the one-dimensional torus \( \mathbb{T} \).
    We use here the standard identification of $\T$ with the additive group $[-1/2,1/2)$ with the sum in $\R$ modulus one.
    From now on we set $\T\equiv [-1/2,1/2).$

    Accordingly, the Fourier transform \( \mathcal{F} \) on the Hilbert space \( \ell^2(\mathbb{Z}) \) is defined by
\begin{equation}\label{FT}
    \mathcal{F}f(\xi) = \sum_{k \in \mathbb{Z}} f(k) e^{2\pi i k \xi}, \quad f \in \ell^2(\mathbb{Z}), \; \xi \in \T.
\end{equation}

This transform defines a unitary operator from \( \ell^2(\mathbb{Z}) \) onto \( L^2(\T) := (L^2(\T), d\xi ) \), where \( d\xi \) is the normalized Haar measure.
We emphasize that functions in $L^2(\T)$ are $1$-periodic.
        \begin{lemma}\label{lemma0}
    The normalized Laplace operator \( \mathcal{L} \) in \eqref{eq:normlap} is a convolution operator. Thus, on the  Fourier transform side, it is a multiplication operator:
    \begin{equation}\label{L-multip}
    \mathcal{F}\mathcal{L} \mathcal{F}^{-1} = M_{\f},
    \end{equation}
    where  $M_{\f} : L^2(\T) \rightarrow L^2(\T)$ is the multiplication operator by the function
     \begin{equation}\label{fi}
        \f(\xi)=(1-\cos(2\pi \xi))=2\, \sin^2({\pi\xi}),\quad\xi\in\ \T.
    \end{equation}
  Namely, $ M_{\f} u=\f \, u$, for every $u\in L^2(\T)$.

    \end{lemma}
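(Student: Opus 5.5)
We will write $\mathcal L$ as a convolution and then compute the Fourier transform of each shift, using the convention \eqref{FT}. By \eqref{eq:normlap} we have $\mathcal L f = f * h$, where $h\in\ell^1(\bZ)$ is the finitely supported sequence
\[
h(0)=1,\qquad h(1)=h(-1)=-\tfrac12,\qquad h(k)=0 \ \text{ for } |k|\ge 2 .
\]
Indeed, $(f*h)(n)=\sum_k f(n-k)h(k)=f(n)-\tfrac12 f(n-1)-\tfrac12 f(n+1)$. Equivalently, $\mathcal L = I-\tfrac12(\tau_1+\tau_{-1})$, where $\tau_a f(n)=f(n-a)$ is the shift. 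This already proves the first assertion of Lemma~\ref{lemma0}.

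Next we compute $\mathcal F$ of a shift. For $f\in\ell^2(\bZ)$, substituting $k\mapsto k+a$ in \eqref{FT} gives
\[
\mathcal F(\tau_a f)(\xi)=\sum_k f(k-a)e^{2\pi i k\xi}=e^{2\pi i a\xi}\,\mathcal F f(\xi).
\]
This holds first for finitely supported $f$, where the sum is finite. It then extends to all of $\ell^2(\bZ)$ by the unitarity of $\mathcal F$, since both sides are bounded operators in $f$.

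By linearity, $\mathcal F\mathcal L f = \bigl(1-\tfrac12(e^{2\pi i\xi}+e^{-2\pi i\xi})\bigr)\mathcal F f = (1-\cos 2\pi\xi)\,\mathcal F f$. Since $1-\cos 2\pi\xi = 2\sin^2(\pi\xi)$, this is exactly $\f\cdot \mathcal Ff$ with $\f$ as in \eqref{fi}. Applying this to $f=\mathcal F^{-1}u$ for $u\in L^2(\T)$, which is legitimate because $\mathcal F$ is onto, yields $\mathcal F\mathcal L\mathcal F^{-1}u=\f u$, i.e.\ \eqref{L-multip}. Finally, $\f$ is bounded with $0\le \f\le 2$, so $M_\f$ is a bounded operator on $L^2(\T)$.

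The only point requiring care is the density and continuity argument that justifies the shift identity on all of $\ell^2(\bZ)$. This is routine: $\mathcal L$ is bounded by \eqref{eq:normlap}, and $M_\f$ is bounded because $\f$ is bounded.
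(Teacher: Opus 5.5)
Your proof is correct and follows the paper's argument. The paper also computes $\mathcal F(\mathcal L f)$ directly, re-indexing the two shifted sums to extract the factors $e^{\mp 2\pi i\xi}$ and then combining them into $1-\cos(2\pi\xi)$. Writing this with the shifts $\tau_{\pm1}$, and adding the density argument for general $f\in\ell^2(\bZ)$ and the surjectivity of $\mathcal F$, makes explicit steps the paper leaves implicit.
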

    \begin{proof}
    Using \eqref{eq:normlap} and \eqref{FT}, we compute
    \begin{align*}
        \mathcal{F}(\mathcal{L} f)(\xi)&=\frac{1}{{2}}\left(2\sum_{n\in\bZ} f(n)e^{2\pi i n\xi}-\sum_{n\in\bZ}f(n+1)e^{2\pi i n\xi}-\sum_{n\in\bZ}f(n-1)e^{2\pi i n\xi}\right)\\
        &= \cF{f}(\xi)-\frac12 e^{-2\pi i \xi} \cF{f}(\xi)-\frac12 e^{2\pi i \xi} \cF{f}(\xi)\\
        &=\cF{f}(\xi)\left(1-\frac12\left( e^{-2\pi i \xi}+e^{2\pi i \xi}\right)\right)\\
        &=(1-\cos(2\pi \xi))\cF{f}(\xi).
    \end{align*}
   This is the desired equality.
    \end{proof}
\begin{remark} If we replace the graph $G = \mathbb{Z}$ with another graph whose normalized Laplace operator can also be expressed as a convolution (as in the case of Cayley graphs of finitely generated groups), then our results will remain valid for that graph as well, with the appropriate modifications.
    \end{remark}
    Now, let
    \begin{equation}\label{X}
    X = L^2([0,2], \bC^2;\mu)=L^2([0,2],\mu)\oplus L^2([0,2],\mu),
    \end{equation}
    with measure $\mu$ having density
\begin{equation}\label{mu}
    d\mu(z):=\frac1{2\pi}\frac{dz}{\sqrt{z(2-z)}},
\end{equation}
and define the operator   $W :  L^2(\T) \longrightarrow X$ by
    \begin{equation}\label{W}
 (Wg)(z) = \left(g\left(\frac1{2\pi}\arccos(1-z)\right),g\left(1-\frac1{2\pi}\arccos(1-z)\;
    \right)\right),\quad g \in L^2(\T),\,
z\in [0,2).
     \end{equation}

     \begin{theorem}\label{teospectr}
Consider the normalized Laplace operator \( \mathcal{L} \) in \eqref{eq:normlap} and  the operator  $W$ defined in \eqref{W}. Then,  operator $ \cU:   \ell^2(\bZ)\to X,$
  defined by
  \begin{equation} \quad \cU= W\mcF,
      \end{equation}
      is a unitary operator.  Furthermore,
  \begin{equation}\label{spettU}
       \cL\,  =\cU^{-1} M_z \,\cU,
    \end{equation}
where $M_z: X \rightarrow X$ is the  multiplication operator by the independent variable defined in \eqref{eq:Multip}, i.e., $(M_z h)(z)=z h(z)$, $z\in [0,2).$
 \end{theorem}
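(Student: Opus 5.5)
Since $\mathcal F:\ell^2(\mathbb Z)\to L^2(\mathbb T)$ is unitary and, by Lemma~\ref{lemma0}, $\mathcal F\mathcal L\mathcal F^{-1}=M_\varphi$, the statement reduces to two facts about $W$: that $W:L^2(\mathbb T)\to X$ is unitary, and that $WM_\varphi=M_zW$. Given these, $\mathcal U=W\mathcal F$ is a composition of unitaries and
\[
\mathcal U\mathcal L\mathcal U^{-1}=W(\mathcal F\mathcal L\mathcal F^{-1})W^{-1}=WM_\varphi W^{-1}=M_z,
\]
which is \eqref{spettU}.

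For the isometry I split $\mathbb T\equiv[-1/2,1/2)$, using $1$-periodicity, into the two halves $[0,1/2]$ and $[1/2,1)$ (equivalently $[-1/2,0]$). On $[0,1/2]$ the map $\xi\mapsto z=\varphi(\xi)=1-\cos(2\pi\xi)$ is a strictly increasing bijection onto $[0,2]$ with inverse $\xi_1(z)=\frac1{2\pi}\arccos(1-z)$. On $[1/2,1)$ it is strictly decreasing onto $(0,2]$ with inverse $\xi_2(z)=1-\xi_1(z)$. These are exactly the two arguments appearing in \eqref{W}. Next I compute the Jacobian:
\[
\Bigl|\frac{d\xi_1}{dz}\Bigr|=\frac1{2\pi}\frac1{\sqrt{1-(1-z)^2}}=\frac1{2\pi}\frac1{\sqrt{z(2-z)}},
\]
and the same holds for $\xi_2$. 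This is precisely the density in \eqref{mu}. The change of variables then gives
\[
\int_{\mathbb T}|g|^2\,d\xi=\int_0^{1/2}|g|^2\,d\xi+\int_{1/2}^1|g|^2\,d\xi=\int_0^2|g(\xi_1(z))|^2\,d\mu(z)+\int_0^2|g(\xi_2(z))|^2\,d\mu(z)=\|Wg\|_X^2 .
\]

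For surjectivity, take any $(h_1,h_2)\in X$ and define $g(\xi)=h_1(\varphi(\xi))$ for $\xi\in[0,1/2)$ and $g(\xi)=h_2(\varphi(\xi))$ for $\xi\in[1/2,1)$, extended periodically. The same computation run backwards shows $g\in L^2(\mathbb T)$ and $Wg=(h_1,h_2)$. The endpoints $z=0,2$, i.e.\ $\xi=0,1/2$, form a null set, so $W$ is well defined on equivalence classes.

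The intertwining relation is immediate from $\varphi(\xi_1(z))=\varphi(\xi_2(z))=z$, which gives
\[
(W\varphi g)(z)=\bigl(z\,g(\xi_1(z)),\,z\,g(\xi_2(z))\bigr)=z\,(Wg)(z).
\]
I expect no real obstacle. The only care needed is in the change of variables on each monotone branch: tracking the sign of the derivative on the decreasing branch $\xi_2$, handling the periodic identification of $[1/2,1)$ with $[-1/2,0)$, and noting that the singular density $\mu$ is integrable, so $X$ is a genuine $L^2$ space and the substitution is legitimate (for instance, by monotone convergence on compact subintervals of $(0,2)$).
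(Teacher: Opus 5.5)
Your proof is correct and takes essentially the same route as the paper: reduce to $W$ via Lemma~\ref{lemma0}, prove the isometry by the change of variables $z=\varphi(\xi)$ on the two monotone branches, build the preimage piecewise for surjectivity, and conclude from $WM_\varphi=M_zW$. The differences are minor: the paper folds $[-1/2,0)$ onto $[0,1/2)$ using $g(1-\xi)=g(-\xi)$ instead of working on $[1/2,1)$, and your check of the intertwining via $\varphi(\xi_1(z))=\varphi(\xi_2(z))=z$ spells out a step the paper only calls easy.
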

\begin{proof}

We will first prove that the operator $W :  L^2(\T) \longrightarrow X$ is unitary, which will imply that  $ \cU:   \ell^2(\bZ) \to X$ is unitary.

Using Lemma \ref{lemma0} we have
    \begin{equation}\label{intertw}
        \cL =\cF^{-1}M_\f \cF.
    \end{equation}
    Since $\cF$ is unitary, the spectrum $\sigma(\cL)$ of $\cL$ is given by
    $$\sigma(\cL)=\sigma(M_\f)$$
    and, by the spectral mapping theorem,
    \[
    \sigma(M_\f)=\f(\sigma(M_\xi)),
    \]
    where $M_\xi$ is the multiplication by the independent variable, namely, $M_\xi g(\xi)=\xi g(\xi)$, for $\xi\in \T$ and $g\in L^2(\T)$. Since  $\sigma(M_\xi)=[-1/2,1/2]$,  we obtain $\f(\sigma(M_\xi))=[0,2]$.

Now take $g\in L^2(\T)$ and compute
    \begin{align*}
        \|g\|_{2}^2&=\int_{-1/2}^{1/2} |g(\xi)|^2 d\xi=\int_0^{1/2}|g(\xi)|^2 d\xi+\int_{-1/2}^0 |g(\xi)|^2 d\xi\\
        &=  \int_0^{1/2}\left(|g(\xi)|^2 +|g(1-\xi)|^2\right) d\xi.
    \end{align*}
    We make the change of variable $z=\f(\xi)$, where $\f$ is defined in \eqref{fi}, so that
    \begin{equation*}
        \xi=\frac1{2\pi}\arccos(1-z),\quad d\xi=\frac1{2\pi}\frac{dz}{\sqrt{(1-(1-z)^2)}}=\frac1{2\pi}\frac{dz}{\sqrt{z(2-z)}},
    \end{equation*}
    and
    $$    \|g\|_{2}^2=\int_0^2\left( \left|g\left(\frac1{2\pi}\arccos(1-z)\right)\right|^2+\left|g\left(1-\frac1{2\pi}\arccos(1-z)\right)\right|^2\right)\frac1{2\pi}\frac{dz}{\sqrt{z(2-z)}} =\|Wg\|^2_X$$
    Hence $W$ is isometric, $$    \|W g\|_{X} = \|g\|_{2},$$
    where
    \begin{equation*}\label{tilde u-bis}
       Wg(z)=\left(g\left(\frac1{2\pi}\arccos(1-z)\right),g\left(1-\frac1{2\pi}\arccos(1-z)
        \right)\right).
    \end{equation*}
    
  The range of $W$ is the whole space $X$; indeed, given $(u_1,u_2)\in X$, the function $g$ defined by 
$$g(\xi)=\left\{\begin{array}{ll}
        u_1(\varphi(\xi)), &\xi\in[0,1/2), \\
       u_2(\varphi(\xi)), & \xi\in[-1/2,0),
   \end{array}\right.$$
    and extended $1$-periodically, belongs to $L^2(\T)$ and satisfies $Wg=(u_1,u_2).$ In particular, by periodicity, for $\xi=\frac1{2\pi}\arccos(1-z)\in[0,\frac12),$ 
    $$g(1-\xi)=g(-\xi)= u_2(\varphi(-\xi))=u_2(\varphi(\xi))=u_2(z).$$
 Hence, $W$ is unitary.
 The  operator $\cU$  given by
    $\cU=W\cF, \quad \cU:\ell^2(\bZ)\to X$ is then unitary.

Furthermore, it is easy to see that $M_{\f}=W^{-1} M_z W$ and since $\cL =\cF^{-1} M_\f \cF,$ we have that
$\cU^{-1} M_z\,\cU = \cL$.
       \end{proof}

    To summarize we have the following commutative diagram:
            \[
        \begin{array}{cccccc}
        &\ell^2(\bZ)  \xrightarrow{\mathcal{F}} & L^2(\T)   \xrightarrow{W}   & X  \\
        & \downarrow{\cL} & \downarrow{M_{\f}}    &  \downarrow{M_z} \\
        & \ell^2(\bZ) \xrightarrow{\mathcal{F}} & L^2(\T) \xrightarrow{W} & X
    \end{array}
    \]
\begin{remark}
Note that the operator $\cU$ gives the decomposition of the spectral theorem for $\cL$.
The multiplicity function is equal to 2  $\mu$-a.e  in $[0,2]$ and $X$ is
a direct integral over $\sigma(\cL)$:
\[
X = \int_{\sigma(\cL)}^{\oplus} H_z \,d\mu(z), \quad\text{with } H_z = \C^2,\ \mu\text{-a.e.}\ z \in [0,2],
\]
and $\mu$ is the scalar spectral measure in $[0,2]$ (see for example \cite{Kriete86,AK73}).
\end{remark}

\begin{remark}  
Given $\omega\in [0,2]$, by Definition \ref{DefPW}, the Graph Paley-Wiener space $GPW_\omega(\bZ)$  is
$$GPW_\omega(\bZ)=\{f\in\ell^2(\bZ):\, \cU f(z)=0, \mu \text{-a.e}.\, z\in [\omega,2]\}.$$
\end{remark}
 It is clear that
 $f\in GPW_\omega(\bZ)$ if and only if $ \cF(f)(\xi)=0,$ where
$ \f(\xi)>\omega,$
hence
$$\cF(GPW_\omega(\bZ))=\{g\in L^2(\T): g(\xi) = 0 \,\ \mbox{if}\,\ \xi\in \T\setminus{\Omega}\}:=L^2(\Omega),$$
where $\Omega=\f^{-1}([0,\omega])$, and $\f$ is defined in \eqref{fi}.
We emphasize that functions in $L^2(\Omega)$ are $1$-periodic with support in $\Omega\subset\T.$

By Theorem \ref{teospectr} we have $\cU(GPW_\omega(\bZ))\subseteq
L^2([0,2], \bC^2;\mu)$.
\medskip

\begin{remark}
The Graph Paley--Wiener space is well defined for any
$\omega \in (0,2]$, or equivalently, for any symmetric interval in the group
$\T$. This construction yields a closed subspace of
$\ell^2(\Z)$. However, the Classical Sampling Theorem in
$\ell^2(\Z)$ makes sense only when the corresponding interval in
$\T$ has length $1/\ell$ for some positive integer $\ell$.

The reason for this is that we will  evaluate the functions in the
Paley--Wiener space on a sub-lattice of $\Z$, namely $\ell\Z$.
Consequently, the interval defining the Paley--Wiener space must
be a fundamental domain of the quotient group $\T$ with the
dual lattice $\{k/\ell\}_{k \in \Z_{\ell}}$. See \cite{K1965}.

This obstacle does not pose a problem in applications if we are willing
to increase the sampling rate.
Let us denote
\[\label{wl}
\omega_\ell = \varphi(1/2\ell), \qquad \ell \in \N.
\]
Then every function $f \in \ell^2(\Z)$ belongs to some
Graph Paley--Wiener space $GPW_{\omega}$ for a certain
$\omega \in (0,2]$.
Since there exists an  $\ell \in \N$ such that $\omega_{\ell+1} < \omega \leq \omega_{\ell}$ and
$GPW_{\omega} \subseteq GPW_{\omega_{\ell}}$,
it follows that $f$ can be sampled
on the sub-lattice $\ell\Z.$\end{remark}

Thus, we need to put some restriction in the values of $\omega$ in order to establish the CST in this context.
For convenience we will choose symmetric intervals around the origin in $\T.$
Observe that $\Omega=\f^{-1}([0,\omega])$ is symmetric with respect to the origin and conversely,  due to the
symmetry of $\f$, any symmetric subinterval of $\Omega$ is of the type $\f^{-1}([0,\omega])$ for some $\omega.$

\begin{figure}[h!]
    \centering
    \includegraphics[width=0.6\textwidth]{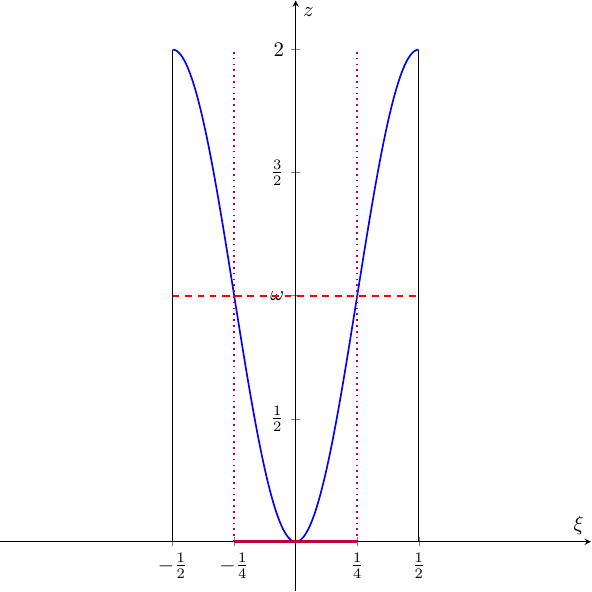}
    \caption{Graph of $\varphi$ in blue. Interval $\Omega =\varphi^{-1}([0,\omega])$ in red.}     \label{fig:cosine_region}
\end{figure}

\subsection{The Classical Sampling Theorem in \texorpdfstring{$GPW_{\omega}(\bZ)$}{GPW\_\{\textbackslash omega\}(Z)}}
We will now exhibit  an adaptation of the Classical Sampling Theorem to our graph setting (see Theorem \ref{ShannonT}).

Let $\omega \in [0,2]$  such that $\omega=\varphi(1/2\ell)$  for some $\ell\in \bN$ and consider the interval
\begin{equation}
\Omega = [ \frac{-1}{2\ell},\frac{1}{2\ell}] \subset \T.
\end{equation}

\begin{theorem}[Classical Sampling Theorem in $GPW_{\omega}(\bZ)$]\label{ShannotheoremZ}

Fix $\ell \in\bN$ and $\omega=\varphi(1/2\ell)$.

Let $\Omega =  [ \frac{-1}{2\ell},\frac{1}{2\ell}] \subset \T$.    Then for every $f \in GPW_{\omega}(\bZ)$ we have
\begin{equation}\label{ShannonZeq}
    f(m) = \sum_{j \in \mathbb{Z}} f(\ell j) \cdot \frac{\sin \pi (\frac{m}{\ell} - j)}{\pi (\frac{m}{\ell} - j)}=\sum_{j \in \mathbb{Z}} f(\ell j)\sinc(\frac{m}{\ell} - j),\quad m\in\bZ,
\end{equation}
    where the convergence is in $\ell^2(\bZ)$, and the sinc function is recalled in \eqref{eq:sinc}.
    \end{theorem}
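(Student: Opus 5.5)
The plan is to move to the Fourier side through the identification $\cF(GPW_\omega(\bZ))=L^2(\Omega)$ with $\Omega=[-\frac1{2\ell},\frac1{2\ell}]$, which was established right after Theorem~\ref{teospectr}. There we use that $\{\sqrt{\ell}\,e^{2\pi i \ell j\xi}\}_{j\in\bZ}$ is an orthonormal basis of $L^2(\Omega)$. This holds because $\Omega$ has length $1/\ell$, and the exponentials $e^{2\pi i\ell j\xi}$ are exactly the characters of period $1/\ell$ (we use $d\xi$ as the measure on $\T$).

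Fix $f\in GPW_\omega(\bZ)$ and set $g=\cF f\in L^2(\Omega)$. By Fourier inversion on $\ell^2(\bZ)$ with the convention \eqref{FT},
\[
f(k)=\int_{\T} g(\xi)e^{-2\pi i k\xi}\,d\xi=\int_\Omega g(\xi)e^{-2\pi i k\xi}\,d\xi .
\]
Taking $k=\ell j$ gives $\langle g,\sqrt{\ell}e^{2\pi i\ell j\cdot}\rangle_{L^2(\Omega)}=\sqrt{\ell}\,f(\ell j)$. Expanding $g$ in the basis then yields
\[
g=\ell\sum_{j}f(\ell j)e^{2\pi i \ell j\xi}\chi_\Omega ,
\]
with convergence in $L^2(\T)$. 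By Parseval the coefficients satisfy $\sum_j|f(\ell j)|^2=\frac1\ell\|f\|^2<\infty$.

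Next apply $\cF^{-1}$, which is unitary, term by term; the series converges in $\ell^2(\bZ)$ because $\cF^{-1}$ preserves norm convergence. It remains to compute
\[
\cF^{-1}\bigl(\ell e^{2\pi i\ell j\xi}\chi_\Omega\bigr)(m)=\ell\int_{-1/(2\ell)}^{1/(2\ell)}e^{-2\pi i(m-\ell j)\xi}\,d\xi .
\]
This is an elementary integral. For $m\ne \ell j$ it equals $\ell\,\frac{\sin(\pi(m-\ell j)/\ell)}{\pi(m-\ell j)}=\sinc(\frac m\ell-j)$, and for $m=\ell j$ it equals $1$. This gives \eqref{ShannonZeq}, and for each fixed $m$ the convergence is also pointwise, since pointwise evaluation is continuous on $\ell^2(\bZ)$.

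The only point needing care is the orthonormal basis claim, specifically that $\Omega$ viewed inside $\T$ (with $1$-periodic functions) is still a genuine interval of length $1/\ell$. For $\ell\ge1$ this holds because $\Omega\subset[-1/2,1/2]$. In the case $\ell=1$, $\Omega=\T$, the formula reduces trivially to $f(m)=f(m)$, and the argument covers it as well. The sign convention in \eqref{FT} must also be tracked consistently, but sinc is even, so it does not affect the result.
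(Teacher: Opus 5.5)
Your proposal is correct and follows essentially the same route as the paper. Both arguments expand $\mathcal{F}f$ in the orthonormal basis $\{\sqrt{\ell}\,e^{2\pi i\ell j\xi}\chi_\Omega\}_{j\in\bZ}$ of $L^2(\Omega)$, identify the coefficients as $\sqrt{\ell}\,f(\ell j)$ by Fourier inversion, and evaluate $\ell\,\mathcal{F}^{-1}(e^{2\pi i\ell j\cdot}\chi_\Omega)(m)=\sinc(m/\ell-j)$. One small difference works in your favour: applying the unitary $\mathcal{F}^{-1}$ to the $L^2$-convergent expansion justifies the stated $\ell^2(\bZ)$ convergence more directly than the paper's substitution into the inversion integral.
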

This result shows that a sequence with {\em graph bandwidth} no greater than $\omega$, or equivalently with Fourier bandwidth $1/2\ell$, can be recovered by sampling on the lattice $\ell\Z$. It is a reformulation, in graph-spectral terms, of the classical sampling theorem on $\bZ$ (see also \cite{K1965} for the LCA-group version).
\begin{proof}
Let $\Omega=[-\frac{1}{2\ell},\frac{1}{2\ell}]=\f^{-1}([0,\omega]).$
    Observe that the set of $1$-periodic functions,
    \[
    \left\{ \frac{1}{\sqrt{|\Omega|}} e^{2\pi i \xi m / |\Omega|} \right\}_{m \in \mathbb{Z}}
    \]
    is an orthonormal basis of $L^2(\Omega)=\cF ({GPW}_{{\omega}}(\Z))\subset L^2(\T)$.

Let $g \in L^2(\Omega)$, and suppose $g= \cF{f},$ with $f\in {GPW}_{\omega}(\Z).$

Then,
\[
g(\xi) = \sum_{j \in \mathbb{Z}} \left\langle g, \frac{1}{\sqrt{|\Omega|}} e^{2\pi i j \xi / |\Omega|} \right\rangle_{L^2(\Omega)} \cdot \frac{1}{\sqrt{|\Omega|}} e^{2\pi i j \xi / |\Omega|}.
\]

That is,
\[
\cF{f}(\xi) = \sum_{j \in \mathbb{Z}} \left\langle\cF(f) , \frac{1}{\sqrt{|\Omega|}} e^{2\pi i j \xi / |\Omega|} \right\rangle_{L^2(\Omega)} \cdot \frac{1}{\sqrt{|\Omega|}} e^{2\pi i j \xi / |\Omega|}.
\]

Now, take inverse Fourier transform
\[
f(m) = \int_{\mathbb{T}} \cF{f}(\xi) e^{-2\pi i \xi m} \, d\xi
\]
and substitute the expression of $\cF{f}(\xi)$. Since $f\in {GPW}_{\omega}(\Z):$
\begin{align*}
f(m)& = \sum_{j \in \mathbb{Z}} \left\langle g, \frac{1}{\sqrt{|\Omega|}} e^{2\pi i j \xi / |\Omega|} \right\rangle_{L^2(\Omega)} \cdot \frac{1}{\sqrt{|\Omega|}} \int_{\mathbb{T}} \chi\strut_{\Omega}(\xi) e^{-2\pi i \xi (m - j/|\Omega|)} \, d\xi\\
&=\frac{1}{\sqrt{|\Omega|}} \sum_{j \in \mathbb{Z}} \left\langle g, \frac{1}{\sqrt{|\Omega|}} e^{2\pi i j \xi / |\Omega|}\right\rangle_{L^2(\Omega)} \cdot \cF^{-1}(\chi\strut_{\Omega}) \left(m - \frac{j}{|\Omega|} \right).
\end{align*}

Now, observe that

\begin{eqnarray*}
\left\langle g, \frac{1}{\sqrt{|\Omega|}} e^{2\pi i j \xi / |\Omega|} \right\rangle_{L^2(\Omega)}
 \!\!\!\!\!\!\!\!&=&\!\!\!\! \int_{\Omega} g(\xi) \cdot \frac{1}{\sqrt{|\Omega|}} e^{-2\pi i j \xi / |\Omega|} \, d\xi = \frac{1}{\sqrt{|\Omega|}} \int_{\Omega} \cF(f)(\xi) e^{-2\pi i j \xi / |\Omega|} \, d\xi\\ \\
 &= &\!\!\!\!\sqrt{\ell} f(\ell j),
\end{eqnarray*}

Hence,
\begin{equation}\label{fm}
f(m) = \ell\sum_{j \in \mathbb{Z}} f(\ell j) \cdot \cF^{-1}(\chi_{\ds \Omega})(m -  \ell j).
\end{equation}

Also, since $|\Omega|=1/\ell,$  by periodicity and the hypothesis on the symmetry of $\Omega,$
\begin{equation}\label{chiinv2}\ell \cF^{-1}(\chi_{\ds \Omega})(m -  \ell j)=\ell\int_{\mathbb{T}} \chi_{\ds \Omega}(\xi) e^{-2\pi i \xi(m-\ell j)} \, d\xi =
\ell\int_{-\frac{1}{2\ell}}^{\frac{1}{2\ell}}  e^{-2\pi i \xi(m-\ell j)} \, d\xi
=\sinc(\frac{m}{\ell}-j).
\end{equation}


 Replacing the previous equality in \eqref{fm} we obtain \eqref{ShannonZeq}.
\end{proof}

Theorem \ref{ShannotheoremZ} tells us that any sequence
$f$ in the Graph Paley--Wiener space $GPW_{\omega}(\Z)$,
with $\omega = \f(1/2\ell)$, can be perfectly reconstructed from its
values on the lattice $\ell\Z$, that is, from $\{f(\ell j)\}_{j \in \Z}$.
The theorem rephrases the Classical Sampling Theorem in the language of graph signals.

\subsection{Dynamical frames on the graph.}

For each $j\in \bZ,$ denote by $g_j$ the sequence $g_j(m) =\frac{1}{\sqrt{\ell}}\,\text{sinc}(m/\ell - j).$

Thus,  $\sqrt{\ell}\,f(\ell j) = \,< f, g_j >_{\ell^2(\bZ)}$ and $\{g_j\}_{j\in\bZ}$  is an orthonormal basis of $GPW_{\omega}(\Z).$
(see the Lemma in Appendix).

The sequence $\{g_j\}_{j\in\mathbb{Z}}$ forms, in particular, a frame for $GPW_{\omega}(\Z)$.
We may also choose any other frame $\{q_k\}_{k\in J}$ for $GPW_{\omega}(\Z)$ and use it to reconstruct any $f\in GPW_{\omega}(\Z)$ from its measurements $\{\langle f, q_k\rangle\}_{k\in J}$.

Using the frame $\{g_j\}_{j\in\mathbb{Z}}$ has the advantage that the coefficients of $f$ correspond directly to pointwise evaluations on a coarser lattice, which can sometimes be beneficial.
However, in practical applications it is often unrealistic to assume access to exact pointwise values due to inevitable measurement errors.

Moreover, while an orthonormal basis offers no redundancy, redundant frames can provide greater robustness to noise and data loss, making them more suitable in many real-world scenarios.

There are several ways to obtain frames in $GPW_{\omega}(\Z)$. For example the orthogonal projection of an  orthonormal basis of $\ell^2(\bZ)$ onto 
the Paley-Wiener space  $GPW_{\omega}(\Z)$ is a Parseval frame of $GPW_{\omega}(\Z).$

In what follows we will show how to construct {\it frames of iterations} of $GPW_{\omega}(\Z)$, using results from dynamical sampling.

Let $\omega\in(0,2]$ and $\Omega=\f^{-1}([0,\omega])=[-\frac{1}{2\ell},\frac{1}{2\ell}]$, where now $\ell:=1/|\Omega|>0$ is any positive real number; the case
\begin{equation}\label{l1}
\ell\in\N,\qquad \omega=\f\Big(\frac{1}{2\ell}\Big),\qquad\Omega=[-\frac{1}{2\ell},\frac{1}{2\ell}],
\end{equation}
is the one in which the frame coefficients can be read as lattice samples (Theorem~\ref{main2}).
We will use Theorem \ref{TeorCarl} to construct a frame of iterations in $L^2(\Omega)$ and then using the inverse of the Fourier transform
we will obtain a frame of the Paley-Wiener space $GPW_{\omega}(\bZ).$ To apply Theorem \ref{TeorCarl} we first need to
have an orthonormal basis of $L^2(\Omega)$; for any interval $\Omega$ the functions in \eqref{bon} below form such a basis, so no restriction on $\omega$ is needed at this stage.

\begin{theorem}\label{main}
    Consider $\omega\in (0,2]$ and $\Omega=\f^{-1}([0,\omega])$.  Let $\Lambda=\{\lambda_j\}_{j\in\N}$ be an uniformly separated sequence in the open unit disc $\{z\in\C: |z|< 1\},$ and consider   the orthonormal basis $(e_j)_{j\geq 0}$ in $L^2(\Omega)$ given by
    \begin{equation}\label{bon}
    e_j(\xi)=\frac{1}{\sqrt{|\Omega|}}{\chi}_{\ds \Omega}(\xi)\, e^{2\pi i \frac{s(j)}{|\Omega|}\xi}, \quad j\geq0, \quad\xi\in \Omega,
    \end{equation}

where $s:\bN\to\bZ$ is any fixed bijection.

  Define the operator $T,$   by
\begin{equation}
    T f = \sum_{j=0}^\infty \lambda_{j} < f, e_j > e_j, \; \;\; f\in L^2(\Omega).
\end{equation}
and a  fixed function
 $a\in L^2(\Omega),$ such that
 \begin{equation}\label{a}
 |\la a,e_j\ra|^2\sim 1-|\lambda_j|^2.
\end{equation}

Then the system $\{T^n a: n\in \N\}$ is a frame of $L^2(\Omega).$
\end{theorem}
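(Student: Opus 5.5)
The plan is to apply Theorem~\ref{TeorCarl} directly in the Hilbert space $\mathcal H = L^2(\Omega)$, with $T$ as defined and $f_0 = a$. This requires three checks: that $(e_j)_{j\ge0}$ is an orthonormal basis of $L^2(\Omega)$, that $T$ is a bounded normal operator of the diagonal form \eqref{diag} with uniformly separated eigenvalues, and that \eqref{coeff} holds. The last is exactly hypothesis \eqref{a}, and uniform separation of $\Lambda$ is assumed. The "if" direction of Theorem~\ref{TeorCarl} then gives that $\{T^n a\}_{n\in\N}$ is a frame. It also gives frame bounds $\alpha\Delta^{-1}$ and $\beta\Delta$, where $\alpha,\beta$ are computed from \eqref{a} and $\delta_\Lambda$.

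\textbf{Orthonormal basis.} Since $\omega\in(0,2]$ and $\f$ is even and increasing on $[0,1/2]$, the set $\Omega=\f^{-1}([0,\omega])$ is a symmetric interval $[-\tfrac{1}{2\ell},\tfrac1{2\ell}]$ with $|\Omega|=1/\ell\le 1$. When $\omega=2$ it is all of $\T$, up to a null set.

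For any interval $I$ of length $L$, the exponentials $\{L^{-1/2}e^{2\pi i k\xi/L}\}_{k\in\Z}$ form an orthonormal basis of $L^2(I)$. This follows from the standard Fourier basis of $L^2(\T)$ after the affine change of variables $\xi\mapsto \xi/L$. Orthonormality itself is a direct computation: $\int_\Omega e^{2\pi i (k-k')\xi/|\Omega|}d\xi = |\Omega|\,\delta_{kk'}$.

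Reindexing through the bijection $s$ preserves both orthonormality and completeness. Hence $(e_j)_{j\ge0}$ is an orthonormal basis of $L^2(\Omega)$, regarded as the closed subspace of $L^2(\T)$ consisting of functions vanishing off $\Omega$. This is the one point needing care: the $e_j$ are not $1$-periodic exponentials on $\T$, but that does not matter, because they are viewed as elements of $L^2(\Omega)$.

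\textbf{Normality and boundedness.} Since $|\lambda_j|<1$, the operator $T$ is a diagonal operator in an orthonormal basis with bounded diagonal. Therefore $\|T\|\le\sup_j|\lambda_j|\le1$, and $T$ is well defined on all of $L^2(\Omega)$. Its adjoint is $T^*f=\sum_j\overline{\lambda_j}\langle f,e_j\rangle e_j$, so $TT^*=T^*T=\sum_j|\lambda_j|^2\langle\cdot,e_j\rangle e_j$ and $T$ is normal. By construction $T$ has exactly the form \eqref{diag}.

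\textbf{Conclusion and main obstacle.} Invoking Theorem~\ref{TeorCarl} completes the proof. The only genuine subtlety is the basis step, that is, identifying $L^2(\Omega)$ correctly and ensuring completeness of the rescaled exponentials. Everything else is a direct verification of the hypotheses.

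It is also worth noting that such an $a$ exists. Taking $a=\sum_j\sqrt{1-|\lambda_j|^2}\,e_j$ works, because $\sum_j(1-|\lambda_j|^2)<\infty$ follows from uniform separation: Carleson sequences satisfy the Blaschke condition. This shows that the hypothesis \eqref{a} is not vacuous.
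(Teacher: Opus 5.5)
Your proposal is correct and follows the paper's proof: the paper likewise notes that $T$ is bounded because $|\lambda_j|<1$, then applies Theorem~\ref{TeorCarl} in $\mathcal H=L^2(\Omega)$ with $f_0=a$. Your checks of the orthonormal basis, the normality of $T$, and the existence of an admissible $a$ via the Blaschke condition fill in details the paper leaves implicit.
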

\begin{proof}
Since $|\lambda_j| < 1$ for all $j \geq 0$ then $T$ defines a bounded operator in  $L^2(\Omega)$.
The proof follows applying  Theorem \ref{TeorCarl} to $H=L^2(\Omega),$ the sequence
$\Lambda,$ the operator $T$ and the function $a.$
\end{proof}

\begin{remark}\label{after}
Note that \eqref{a} is equivalent to
$$\ell\,\big|\,\widetilde{f_0}\left(\tfrac{s(j)}{|\Omega|}\right)\big|^2\sim 1-|\lambda_j|^2,$$ where $f_0=\cF^{-1}a$ and $\widetilde{f_0}(x)=\int_\Omega a(\xi)e^{-2\pi i x\xi}\,d\xi$, $x\in\R$, is its bandlimited extension. In particular, $\widetilde{f_0}(s(j)/|\Omega|)=f_0(\ell s(j))$ when $\ell\in\N$.
\end{remark}

Now using that the Fourier transform is a unitary operator that maps $GPW_\omega(\Z)$ onto $L^2(\Omega)$ we construct a frame of iterations
in the Paley-Wiener space:
\begin{theorem}\label{main2}
Let  $T$ and $a \in L^2(\Omega)$ as in Theorem \ref{main} and set $R=\cF^{-1}\,T\,\cF.$
Let $\ell\in\N,$ such that $\f(1/2\ell)=\omega.$
Then, $R\in\cB(\ell^2(\bZ))$, satisfies $R(GPW_\omega(\bZ))\subset GPW_\omega(\bZ)$ and has the form
\begin{equation}\label{eq:R}
    (Rf)(k)=\sum_{j=0}^\infty \lambda_{j}f(\ell{s(j)})\sinc (k/\ell -s(j)),\quad k\in\bZ,
\end{equation}
 with convergence in $\ell^2(\bZ)$.  Setting $f_0=\cF^{-1}a$, the sequence $\{R^n\,f_0\}_{n\geq 0}$
is a frame for $GPW_\omega(\bZ)$.
\end{theorem}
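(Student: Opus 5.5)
The proof transports everything through the Fourier transform $\cF$, which is unitary from $\ell^2(\bZ)$ onto $L^2(\T)$ and maps $GPW_\omega(\bZ)$ onto $L^2(\Omega)$. One point about the domain of $T$ has to be fixed first. Theorem~\ref{main} defines $T$ on $L^2(\Omega)$, so to make sense of $R=\cF^{-1}T\cF$ on all of $\ell^2(\bZ)$ I will extend $T$ to $L^2(\T)$ by the same formula $Tg=\sum_j\lambda_j\langle g,e_j\rangle e_j$. This extension equals $T P_{\Omega}$, where $P_\Omega$ is the orthogonal projection of $L^2(\T)$ onto $L^2(\Omega)$ (multiplication by $\chi_\Omega$).

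\textbf{Boundedness and invariance.} Because $(e_j)$ is an orthonormal basis of $L^2(\Omega)$ and $\sup_j|\lambda_j|\le 1$, Bessel's inequality gives $\|Tg\|\le\|P_\Omega g\|\le\|g\|$. Hence $R\in\cB(\ell^2(\bZ))$ with $\|R\|\le1$. The range of $T$ lies in $L^2(\Omega)$, so $R$ maps $\ell^2(\bZ)$, and in particular $GPW_\omega(\bZ)$, into $\cF^{-1}L^2(\Omega)=GPW_\omega(\bZ)$.

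\textbf{Explicit form \eqref{eq:R}.} Since $\ell\in\N$, we have $|\Omega|=1/\ell$, so $e_j(\xi)=\sqrt{\ell}\,\chi_\Omega(\xi)e^{2\pi i \ell s(j)\xi}$. There are two computations, both exactly those in the proof of Theorem~\ref{ShannotheoremZ}.
\begin{enumerate}
\item The coefficient: $\langle \cF f,e_j\rangle=\sqrt{\ell}\int_\Omega \cF f(\xi)e^{-2\pi i\ell s(j)\xi}\,d\xi$. For $f\in GPW_\omega(\bZ)$ this equals $\sqrt{\ell}\,f(\ell s(j))$ by the inversion formula.
\item The basis element: $\cF^{-1}e_j(k)=\sqrt{\ell}\int_{\Omega}e^{-2\pi i\xi(k-\ell s(j))}\,d\xi=\frac{1}{\sqrt\ell}\sinc(k/\ell-s(j))$, by \eqref{chiinv2}.
\end{enumerate}
The product of the two factors is $f(\ell s(j))\sinc(k/\ell-s(j))$. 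The series $\sum_j\lambda_j\langle\cF f,e_j\rangle e_j$ converges in $L^2(\T)$ because its coefficients are square summable, and $\cF^{-1}$ is unitary, so applying $\cF^{-1}$ termwise is legitimate and gives \eqref{eq:R} with convergence in $\ell^2(\bZ)$.

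This formula needs care for general $f\in\ell^2(\bZ)$, because pointwise values $f(\ell s(j))$ are no longer the Fourier coefficients of $P_\Omega\cF f$. The clean statement is $Rf=R P f$, where $P$ is the orthogonal projection onto $GPW_\omega(\bZ)$, so \eqref{eq:R} holds literally for $f\in GPW_\omega(\bZ)$. I expect this to be the main, if minor, obstacle: the formula must be stated for $f\in GPW_\omega(\bZ)$, or with $f(\ell s(j))$ read as $(Pf)(\ell s(j))$.

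\textbf{Frame property.} For $n\ge0$ we have $R^n f_0=\cF^{-1}T^n\cF f_0=\cF^{-1}T^na$. Theorem~\ref{main} says $\{T^na\}_{n\ge0}$ is a frame for $L^2(\Omega)$. For $f\in GPW_\omega(\bZ)$, unitarity gives $\langle f,R^nf_0\rangle=\langle\cF f,T^na\rangle$ and $\|f\|=\|\cF f\|$, so the frame inequalities transfer verbatim with the same bounds. Therefore $\{R^nf_0\}_{n\ge0}$ is a frame for $GPW_\omega(\bZ)$.
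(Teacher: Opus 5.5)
Your proof is correct and follows the paper's route. You transport through the unitary $\cF$, compute $\langle \cF f,e_j\rangle=\sqrt{\ell}\,f(\ell s(j))$ and $\cF^{-1}e_j=\ell^{-1/2}\sinc(\cdot/\ell-s(j))$ as in the sampling theorem, and transfer the frame $\{T^n a\}_{n\ge0}$ via $R^nf_0=\cF^{-1}T^n a$. Your extension of $T$ to $L^2(\T)$ as $TP_\Omega$ (giving $\|R\|\le 1$), your justification of $\ell^2$-convergence by applying $\cF^{-1}$ termwise, and your remark that \eqref{eq:R} holds literally only for $f\in GPW_\omega(\bZ)$ (with $Pf$ in general) make explicit points that the paper leaves implicit.
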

\begin{proof}
By assumption, the following commutative diagram holds:
\begin{equation}\label{comm}
        \begin{array}{ccc}
        GPW_\omega(\Z) & \xrightarrow{\cF} & L^2(\Omega)\\
        \downarrow^{R} &  & \downarrow^{T} \\
        GPW_\omega(\Z) & \xrightarrow{\cF} & L^2(\Omega). \\
    \end{array}
\end{equation}
    For any $f\in GPW_\omega(\Z)$, we have $\cF{f}\in L^2(\Omega)$. Using \eqref{bon}, the fact that $\cF f$ has support in $\Omega$,  and the same pattern as in the proof of the  sampling  Theorem \ref{ShannotheoremZ},
    \begin{align*}
        T\,\cF{f}(\xi) &= \sum_{j=0}^\infty \lambda_{j} < \cF{f}, e_j> e_j(\xi)\\
        &= \ell\sum_{j=0}^\infty \lambda_{j}f(\ell{s(j)})\chi_{\ds \Omega}(\xi)\, e^{2\pi i \ell s(j)\xi},
    \end{align*}
    where $\ell=1/|\Omega|$ as  in \eqref{l1}. Further, we use the commutative diagram \eqref{comm} so that $R=\cF^{-1}\,T\,\cF$. Hence, for every $k\in\bZ$,

    \begin{align*}
    Rf(k)&= \ell\sum_{j=0}^\infty \lambda_{j}f(\ell{s(j)})
    \int_{\mathbb{T}} \chi_{\ds \Omega}(\xi)\,e^{2\pi i \ell s(j)\xi} e^{-2\pi i k\xi}d\xi\\
    &=\ell \sum_{j=0}^\infty \lambda_{j}f(\ell {s(j)})\int_{-\frac{1}{2\ell}}^{\frac{1}{2\ell}}
    \, e^{-2\pi i(k- \ell s(j))\xi}d\xi\\
    &=\sum_{j=0}^\infty \lambda_{j}f(\ell {s(j)})\frac{\sin \pi (k/\ell - s(j))}{\pi (k/\ell -s(j))}\\
        &=\sum_{j=0}^\infty \lambda_{j}f(\ell {s(j)})\sinc  (k/\ell - s(j)).
\end{align*}


This follows by \eqref{chiinv2}.

Hence,  \eqref{eq:R} is proved.
Now, since
$$R^nf_0=\cF^{-1}T^n \cF f_0=\cF^{-1}T^n a$$
and the sequence $(T^n a)_{n\geq 0}$,  is a frame for $L^2(\Omega)$,
we have that $(R^nf_0)_{n\geq 0}$ is a frame for $GPW_\omega(\bZ).$ \end{proof}

Now, since $(R^nf_0)_{n\geq 0}$
    is a frame for $GPW_\omega(\bZ)$, then any signal $f\in GPW_{\omega}(\bZ)$ can be recovered from the measurements
    $\{\la f, R^nf_0\ra\}_{n\geq 0}$ using any dual frame $\{g_n\}_n$ of $\{R^nf_0\}_{n\geq 0}$ in $GPW_{\omega}(\bZ),$ that is,

\begin{equation}\label{RC}
    f=\sum_{n\geq0}\la f, R^nf_0\ra \;g_n.
\end{equation}
The frame coefficients  $\la f, R^nf_0\ra$ are   of the form
\begin{align}\label{coefR}
    \la f, R^nf_0\ra&=\ell\,\sum_{j\geq0}f(\ell s(j))\overline{\lambda^n_jf_0(\ell s(j))}.
\end{align}

Indeed, since
 $R^nf_0=\cF^{-1}T^n\cF f_0=\cF^{-1}T^n a,$
    and using Theorem \ref{main2} we have
    $$    (R^nf)(k)=\sum_{j=0}^\infty \lambda^n_{j}f(\ell s(j))\sinc(k/\ell -s(j)),\quad k\in\bZ.$$

Now, consider $f_0=\cF^{-1}a$, with $a$ in \eqref{a}. For every $f\in GPW_{\omega}(\bZ)$,

\begin{align*}
    \la f, R^nf_0\ra&=\sum_{k\in\bZ}\sum_{j\geq0}
f(k)\overline{\lambda^n_j\,f_0(\ell {s(j)})}\sinc(k/\ell -s(j))\\
&=\sum_{j\geq0}\overline{\lambda^n_j\,f_0(\ell {s(j)})}
\sum_{k\in\bZ}f(k)\sinc(k/\ell -s(j))\\
&=\ell\,\sum_{j\geq0}\overline{\lambda^n_j\,f_0(\ell {s(j)})} f(\ell s(j)),
\end{align*}
where we used \eqref{repker} in the last equality.

Observe that we have just shown that every function $f\in GPW_{\omega}(\bZ)$ can be recovered from the
samples
\[
\{\langle f,R^n f_0\rangle\}_{n\geq 0}.
\]
Since
\[
\langle f,R^n f_0\rangle
=
\langle (R^*)^n f,f_0\rangle,
\]
this reconstruction admits a natural interpretation in the framework of
dynamical sampling. Indeed, one may view $f$ as evolving in time under the
action of the evolution operator $R^*$, while at each time $n$ the evolved
state $(R^*)^n f$ is sampled using the same sensor $f_0$.

\subsection{The $n$-dimensional case \texorpdfstring{$\bZ^n$}{Zn}}\label{subsec:zn}
A comparable result applies to lattices $\mathbb{Z}^n$ in arbitrary dimension.  In this case, the Fourier transform $\mathcal{F}$ on the Hilbert space $\ell^2(\mathbb{Z}^n)$ is defined as a unitary operator given by the expression
\[
(\mathcal{F}f)(\xi_1,\dots,\xi_n) = \sum_{(k_1,\dots,k_n) \in \mathbb{Z}^n} f(k_1,\dots,k_n) e^{ 2\pi i(k_1\xi_1 +\dots+ k_n\xi_n)}, \quad f \in \ell^2(\mathbb{Z}^n),
\]
where $(\xi_1,\dots, \xi_n)$ ranges over the torus $\mathbb{T}^n := [-\frac{1}{2}, \frac{1}{2})^n$. This operator $\mathcal{\cF}$ constitutes an isometric isomorphism from $\ell^2(\mathbb{Z}^n)$ to $L^2(\mathbb{T}^n, d\xi_1,\dots d\xi_n)$, where $\mathbb{T}^n$ denotes the $n$-dimensional torus.

For the normalized discrete Laplacian on $\mathbb{Z}^n$ the following identity holds:
\[
\cF(\cL f)(\xi_1,\dots,\xi_n) = \frac{2}{n}\left( \sin^2(\pi\xi_1) +\dots+ \sin^2(\pi\xi_n) \right) \cF (f)(\xi_1,\dots,\xi_n).
\]
The graph $\mathbb{Z}^n$ has constant degree $d(\textbf{k})=2 n,$ where we write
$\bk=(k_1,\dots,k_n)\in\bZ^n$ for short. Hence
$$ \cL f(\bk)=\frac{1}{2n}\left(\sum_{{\bh}\sim{\bk}} f(\bk)-f(\bh)\right)
=\frac{1}{2n}\left(2n f(\bk)-\sum_{j=1}^n f(\bk-\delta_j)-\sum_{j=1}^n f(\bk+\delta_j)\right),$$
where $\delta_j\in\bZ^n$ has $1$ in the $jth$ position and zeroes elsewhere.
Therefore
\begin{align*}
\mathcal{F}(\mathcal{L} f)({\boldsymbol{\xi}})&=
\frac{1}{2n}\left(2n \mathcal{F}(f)({\boldsymbol{\xi}})-\sum_{j=1}^n e^{-2\pi i\delta_j\cdot \boldsymbol{\xi}} \mathcal{F}(f)(\boldsymbol{\xi})-
\sum_{j=1}^n e^{2\pi i\delta_j\cdot \boldsymbol{\xi}} \mathcal{F}(f)(\boldsymbol{\xi})\right)\\ \\
&=
 \mathcal{F}(f)(\boldsymbol{\xi})\left(1-\frac{1}{2n}\sum_{j=1}^n (e^{-2\pi i\delta_j\cdot \boldsymbol{\xi}} +e^{2\pi i\delta_j\cdot \boldsymbol{\xi}})\right)\\ \\
 &=
 \frac{1}{n}\mathcal{F}(f)(\boldsymbol{\xi})\left(n-\sum_{j=1}^n \frac{e^{-2\pi i\delta_j\cdot \boldsymbol{\xi}} +e^{2\pi i\delta_j\cdot \boldsymbol{\xi}}}{2}\right)
 =
 \frac{1}{n}\mathcal{F}(f)(\boldsymbol{\xi})\left(n-\sum_{j=1}^n \frac{e^{-2\pi i\xi_j} +e^{2\pi i\xi_j}}{2}\right)\\ \\
 &=\frac{1}{n}\mathcal{F}(f)(\boldsymbol{\xi})\left(n-\sum_{j=1}^n \cos(2\pi \xi_j)\right)=\mathcal{F}(f)(\boldsymbol{\xi})\left(\frac{2}{n}\sum_{j=1}^n \sin^2(\pi \xi_j)\right)
\end{align*}

This leads to the following theorem, cf. Theorem 5.4 of \cite{PEsensonTAMS2008}:
\begin{theorem}
    The spectrum of the Laplace operator on the lattice $\mathbb{Z}^n$ is exactly the interval $[0, 2]$.
\end{theorem}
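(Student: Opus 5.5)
The plan is to copy the one-dimensional argument in the proof of Theorem~\ref{teospectr}, dropping the direct-integral bookkeeping, since only the spectrum is needed. By the computation just above, $\cF\cL\cF^{-1}=M_\Phi$ on $L^2(\T^n)$, where
\[
\Phi(\boldsymbol{\xi})=\frac{2}{n}\sum_{j=1}^n \sin^2(\pi\xi_j),\qquad \boldsymbol{\xi}\in\T^n .
\]
Since $\cF$ is unitary, $\sigma(\cL)=\sigma(M_\Phi)$. For a multiplication operator by a continuous (hence bounded) real function on $\T^n$ with Lebesgue measure, the spectrum equals the essential range of $\Phi$.

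So the argument reduces to showing $\operatorname{ess\,ran}\Phi=[0,2]$. Each term satisfies $0\le \sin^2(\pi\xi_j)\le 1$, so $0\le\Phi\le 2$, giving $\sigma(\cL)\subseteq[0,2]$. This also agrees with $\cL$ being positive with norm at most $2$.

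For the reverse inclusion, take the diagonal path $\boldsymbol{\xi}=(t,\dots,t)$, $t\in[0,1/2]$. Along it, $\Phi=2\sin^2(\pi t)$, which runs continuously from $0$ to $2$, so $\Phi(\T^n)\supseteq[0,2]$. Because $\Phi$ is continuous, every value $c=\Phi(\boldsymbol{\xi}_0)$ lies in the essential range: for each $\varepsilon>0$, the set $\{|\Phi-c|<\varepsilon\}$ contains an open neighbourhood of $\boldsymbol{\xi}_0$ and so has positive measure.

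To make the inclusion $c\in\sigma(M_\Phi)$ explicit, one can use approximate eigenvectors. Normalize the indicator functions $u_\varepsilon=\chi_{B_\varepsilon}/|B_\varepsilon|^{1/2}$ of small balls $B_\varepsilon$ around $\boldsymbol{\xi}_0$; then $\|(M_\Phi-c)u_\varepsilon\|\to0$, so $c\in\sigma(M_\Phi)$. Alternatively, observe that $\sigma(M_\Phi)$ is closed and contains $\Phi(\T^n)$.

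None of these steps is a real obstacle. The only point needing care is the identification of the spectrum of a multiplication operator with the essential range, and the approximate-eigenvector argument above handles it directly. One could instead argue as in Theorem~\ref{teospectr} via $\sigma(M_\Phi)=\Phi(\sigma(M_{\boldsymbol{\xi}}))$, but the spectral mapping theorem in several commuting variables is less elementary. The direct argument is preferable, and it recovers Theorem 5.4 of \cite{PEsensonTAMS2008}.
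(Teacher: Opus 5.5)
Your argument is correct and is essentially the paper's own route. The paper's Fourier computation preceding the theorem turns $\cL$ into multiplication by $\frac{2}{n}\sum_{j}\sin^2(\pi\xi_j)$, so the spectrum is the range of this continuous symbol on $\T^n$. The paper leaves that range computation implicit and cites Pesenson; your diagonal path supplies it. Using the essential range and approximate eigenvectors, rather than the spectral-mapping phrasing of Theorem~\ref{teospectr}, is only a cosmetic difference. One small point: with $\T^n=[-1/2,1/2)^n$, the value $2$ is attained at $t=-1/2$ (or by periodicity), not at $t=1/2$.
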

\begin{figure}
  \centering
  \includegraphics[width=10cm]{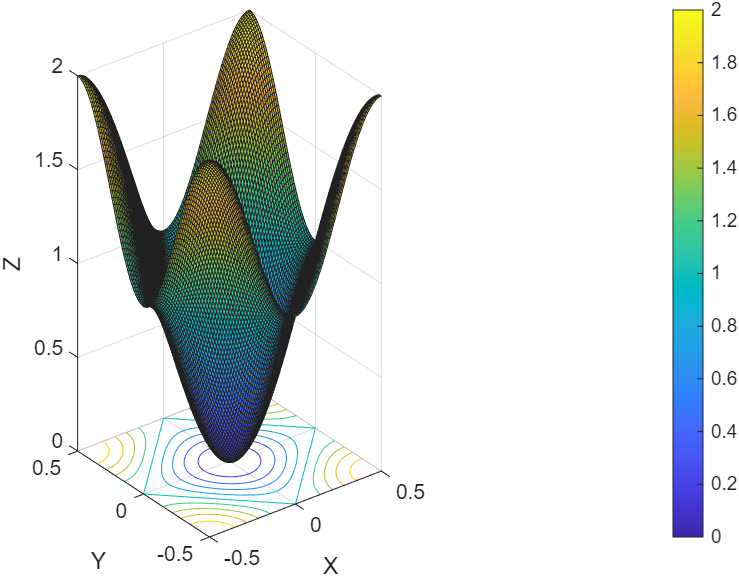}\\
  \caption{Boundary  of $\Omega_2$ $n=2,$ for different values of $\omega.$}\label{level4}
\end{figure}

A function $f \in \ell^2(\mathbb{Z}^n)$ belongs to the Paley–Wiener space $GPW_\omega(\mathbb{Z}^n)$ for $0 \leq \omega \leq 2$ if and only if the support of its Fourier transform $\mathcal{\cF}f$ is contained in the measurable set $ \Omega_n\subset[-\frac{1}{2},\frac{1}{2})^n$ defined by\begin{equation}\label{omegan}
 \Omega_n=\{ (\xi_1,\dots,\xi_n)\in\T^n: \frac{2}{n}(\sin^2(\pi \xi_1) +\dots+ \sin^2(\pi \xi_n) )\leq \omega \}.
 \end{equation}

The proof follows analogously to the one-dimensional case.
From now on we shall use the notation $\Omega_n$ for the above set.

Thus, by Definition \ref{DefPW}, for each $\omega \in [0,2]$ we have
\begin{align}\label{GPW}
GPW_\omega(\mathbb{Z}^n)&= \{f\in\ell^2(\bZ^n) : \text{supp}(\mathcal{F}f )\subseteq \Omega_n\},
\end{align}
and $\cF\cL\cF^{-1}$ is the multiplication operator by the symbol $\frac2n\sum_j\sin^2(\pi\xi_j)$.

The idea, once again, is to construct an operator whose iterates generate a frame for $L^2(\Omega_n)$. The main difficulty is that, for each value of $\omega$, the domain $\Omega_n$ may have a complicated geometry, making it difficult to apply the method developed for the group $\mathbb{Z}$. Our strategy is therefore to replace $\Omega_n$ with an equivalent domain contained in $[-1/2,1/2]^n$ that is more amenable to analysis. This construction is described in the following section.
\subsubsection{General method in the $n$-dimensional case}
Suppose $\Omega_n\subset\T^n$ is mapped by a (piecewise) $\mathcal{C}^1$, bijective map to a domain $D\subset\T^n,$
and that we know
an orthonormal basis of $L^2(D)$ ($1$-periodic functions in all directions with support in $D$). Call this orthonormal basis $\{\psi_n\}_{n\geq 0}.$


Denote this $\mathcal{C}^1$ bijection by $P:\T^n\rightarrow \T^n,$ and assume that its Jacobian $J_P$ is 
not zero on
$\Omega_n,$ with obvious modifications in the case of $P$ piecewise $\mathcal{C}^1$ (this is the case for the piecewise translations used below, where $|J_P|\equiv1$). Then both  maps
\begin{eqnarray}
& &S:L^2(\Omega_n)\rightarrow L^2(D),\quad Sg=g\circ P_{|D}^{-1}:D\rightarrow \C,\nonumber\\
& &\label{sbije}\\
& & S^{-1}:L^2(D)\rightarrow L^2(\Omega_n),\quad S^{-1}f=f\circ P_{|\Omega_n}:\Omega_n\rightarrow \C,\nonumber
\end{eqnarray}
preserve periodicity.

After a change of variable
(possibly in each subdomain where $P$ is $\mathcal{C}^1$),
$$\int_D |Sf(\tau)|^2\, d\tau=\int_D |f(P^{-1}(\tau))|^2\, d\tau=\int_{\Omega_n}|f(\xi)|^2\,|J_P(\xi)|\,d\xi,$$
so that, since $J_P$ is bounded above and below on $\Omega_n$, $S$ is a bounded, invertible operator with bounded inverse.

We recall the following general result (see \cite[Corollary~5.3.2]{Olebook})
\begin{theorem}\rev{\label{thm:framesinvertible}}
Let $\mathcal{H}, \mathcal{K}$ be separable Hilbert spaces, $F: \mathcal{H}\rightarrow \mathcal{K}$
be a bounded and invertible operator. Then $F$ sends frames of $\mathcal{H}$ into frames of
$\mathcal{K}.$
\end{theorem}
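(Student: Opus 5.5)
The plan is to push the frame inequalities through $F$ and its inverse. Let $\{f_k\}_{k\in I}$ be a frame for $\mathcal H$ with bounds $A,B$, and consider $\{Ff_k\}_{k\in I}\subset\mathcal K$. Both frame inequalities for $\{Ff_k\}$ reduce to the ones for $\{f_k\}$ via the identity
\[
\langle g, Ff_k\rangle_{\mathcal K}=\langle F^*g, f_k\rangle_{\mathcal H},\qquad g\in\mathcal K,
\]
together with norm estimates for the adjoint $F^*$.

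For the upper bound, fix $g\in\mathcal K$. The identity and the upper frame bound give
\[
\sum_{k\in I}|\langle g,Ff_k\rangle|^2=\sum_{k\in I}|\langle F^*g,f_k\rangle|^2\le B\|F^*g\|^2\le B\|F\|^2\|g\|^2 .
\]
So $B\|F\|^2$ is an upper frame bound.

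For the lower bound, we need a lower estimate on $\|F^*g\|$. Since $F$ is bounded and invertible, $F^{-1}$ is bounded by the bounded inverse theorem, or by hypothesis if "invertible" already means a bounded inverse. Then $F^*$ is invertible with $(F^*)^{-1}=(F^{-1})^*$, because
\[
F^*(F^{-1})^*=(F^{-1}F)^*=I_{\mathcal H},\qquad (F^{-1})^*F^*=(FF^{-1})^*=I_{\mathcal K}.
\]
It follows that
\[
\|g\|=\|(F^{-1})^*F^*g\|\le\|F^{-1}\|\,\|F^*g\|, \quad\text{that is,}\quad \|F^*g\|\ge\|F^{-1}\|^{-1}\|g\|.
\]
Combining this with the lower frame bound,
\[
\sum_{k\in I}|\langle g,Ff_k\rangle|^2\ge A\|F^*g\|^2\ge A\|F^{-1}\|^{-2}\|g\|^2 .
\]
Hence $\{Ff_k\}$ is a frame for $\mathcal K$ with bounds $A\|F^{-1}\|^{-2}$ and $B\|F\|^2$.

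The only step needing care is the lower bound, specifically that $F^*$ is bounded below on all of $\mathcal K$. This is exactly where invertibility of $F$ (surjectivity as well as injectivity) enters, through the invertibility of $F^*$ shown above. The rest is routine.
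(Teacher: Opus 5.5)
Your proof is correct and is essentially the standard argument. The paper gives no proof of its own and simply cites Christensen's Corollary~5.3.2, which transfers the frame inequalities through the adjoint $F^*$ in the same way, with bounds $A\|F^{-1}\|^{-2}$ and $B\|F\|^2$ (stated there more generally for bounded surjective $F$, with the pseudo-inverse $F^\dagger$ in place of $F^{-1}$).
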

Now, starting from the orthonormal basis $\{\psi_n\}_{n\geq 0}$ in $L^2(D),$ and a uniformly separated sequence $(\lambda_j)_{j\geq 0},$ we consider the operator $T$ as in \eqref{diag}
\begin{equation}\label{ndimT}
T:L^2(D)\rightarrow L^2(D), \quad T=\sum_{j=0}^{+\infty} \lambda_j \l \cdot,\psi_j\r \psi_j
\end{equation}
and $h_0\in L^2(D)$ whose coefficients verify condition \eqref{coeff}. By Theorem \ref{TeorCarl}
we get a frame of iterations $(T^n h_0)_{n\in\N}$ for $L^2(D).$

It follows that, setting $f_0=\cF^{-1}\,S^{-1} h_0\in \ell^2(\Z^n),$ and
$$R=\cF^{-1}\,S^{-1} T S \cF :\ell^2(\Z^n)\rightarrow \ell^2(\Z^n),$$
we get $R(GPW_{\omega}(\Z^n))\subset GPW_{\omega}(\Z^n),$ and the sequence
$(R^n f_0)_{n\in\N}$ is a frame of iterations  for $GPW_\omega(\Z^n).$

Now, as one can see from Figure \ref{level4}, for a fixed $n$, one cannot expect to use the same
bijection $P$ for all values of $\omega\in[0,2]$. In the following we treat the case $n=2$, $\omega=1$.

\subsubsection{The case $n=2$ and $\omega=1$}
In the case $n=2$ and $\omega=1,$  $\xi=(\xi_1,\xi_2)\in\Omega_2$ verifies
$$\sin^2({\pi}\xi_1)+\sin^2({\pi}\xi_2)\leq 1\Leftrightarrow \cos(2\pi\xi_1)+\cos(2\pi\xi_2)\ge0\Leftrightarrow |\xi_1|+|\xi_2|\le\frac12,$$
so that $\Omega_2$ is the diamond of area $1/2$ with vertices $(\pm\frac12,0)$, $(0,\pm\frac12)$ (the right-hand equivalence follows from $\cos a+\cos b=2\cos\frac{a+b}{2}\cos\frac{a-b}{2}$).
Consider the piecewise translation
\begin{equation}\label{bij21}
P:\T^2\rightarrow \T^2,
\end{equation}
which sends $\Omega_2\subset \T^2$ onto the rectangle
$D=\left[-\frac{1}{2},0\right]\times\left[-\frac{1}{2},\frac{1}{2}\right]\subset\T^2,$
by leaving unchanged the left half of $\Omega_2,$ i.e. the set
$$D_1=\{ -\frac{1}{2}< \xi_1< 0,\; -\xi_1-\frac{1}{2}< \xi_2< \xi_1+\frac{1}{2}\},$$
and exchanging  by translation
$$D_2=\{ 0\leq \xi_1\leq \frac{1}{2},\; 0\leq \xi_2\leq-\xi_1+\frac{1}{2}\}$$ with
$$\widetilde{D}_2=D_2+(-\frac{1}{2},-\frac{1}{2})=\{ -\frac{1}{2}\leq \xi_1\leq0,\; -\frac{1}{2}\leq \xi_2\leq-\xi_1-\frac{1}{2}\},$$
and
$$D_3=\{ 0\leq \xi_1\leq \frac{1}{2},\; \xi_1-\frac{1}{2}\leq \xi_2< 0\}$$ with
$$\widetilde{D}_3=D_3+(-\frac{1}{2},\frac{1}{2})=\{ -\frac{1}{2}\leq \xi_1\leq0,\; \xi_1+\frac{1}{2}\leq \xi_2< \frac{1}{2}\},$$
(translations are understood modulo $\bZ^2$).
\begin{figure}[t]
    \begin{minipage}[th]{.45\textwidth}
        \centering
        \includegraphics[width=\textwidth]{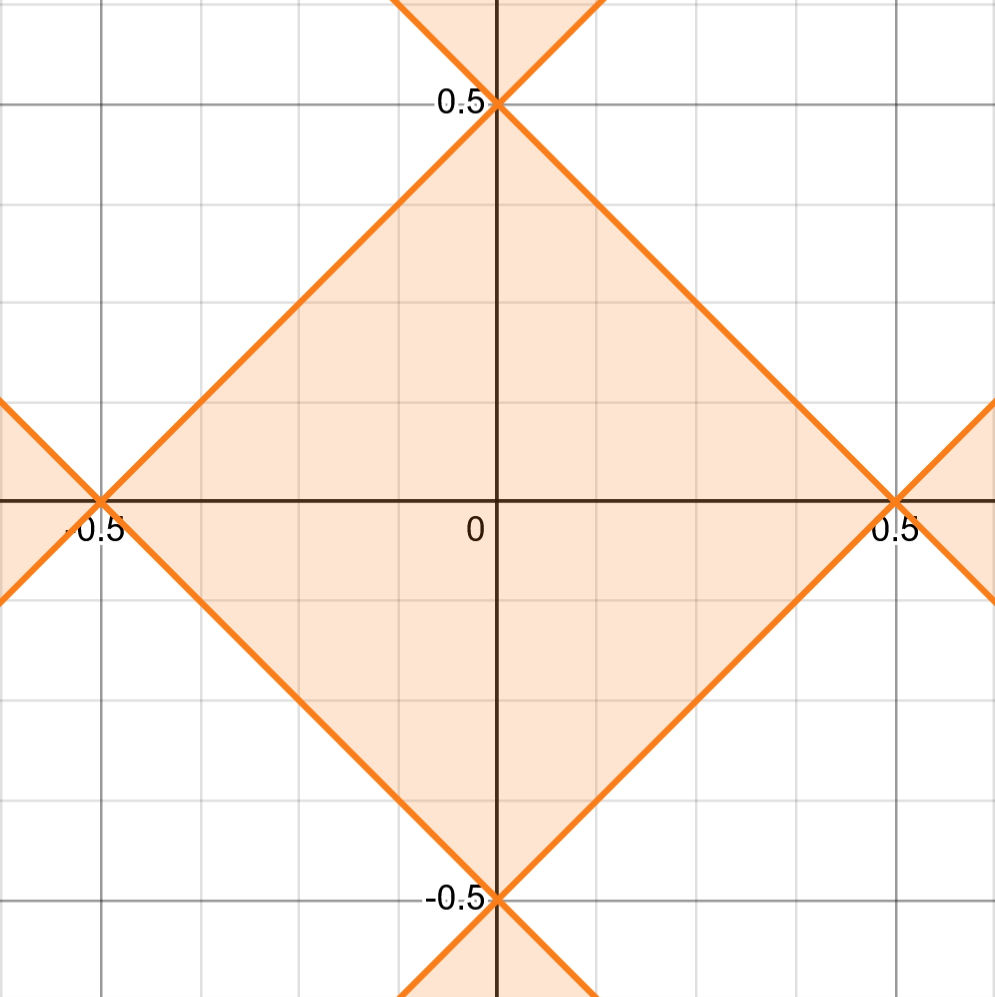}
       \caption{\!\!The domain $\Omega_2$.}\label{figOmega2}
    \end{minipage}
    \hfill
    \begin{minipage}[th]{.45\textwidth}
        \centering
        \includegraphics[width=\textwidth]{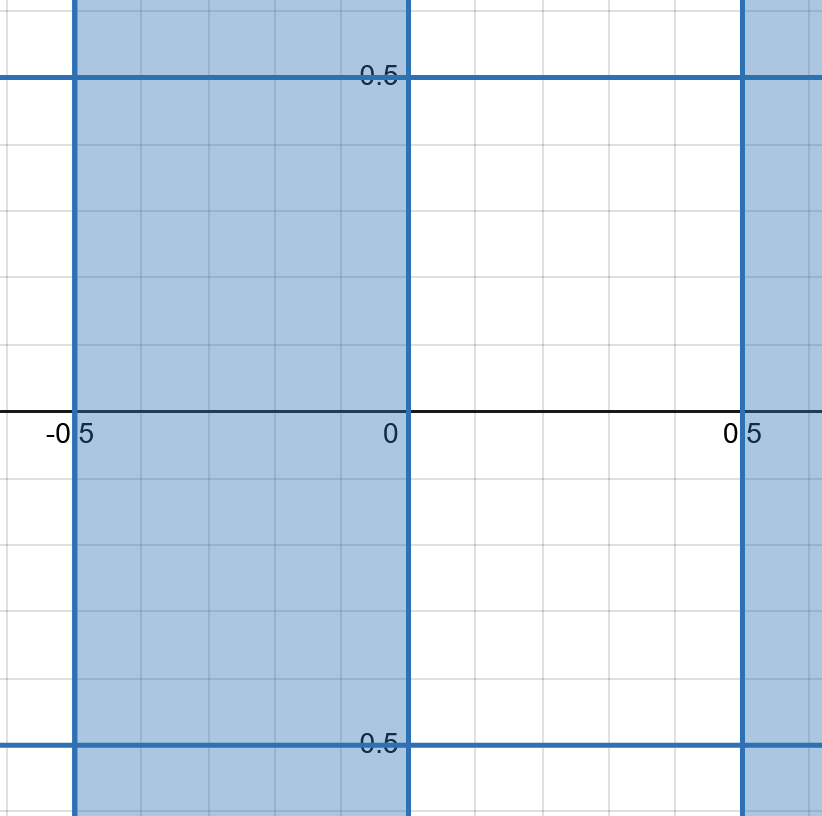}
       \caption{\!\!The image $P(\Omega_2)$.}\label{figOmega2Tr}
   \end{minipage}
\end{figure}

Therefore we can write in $\T^2,$ identified with $[-\frac{1}{2},\frac{1}{2})^2,$ 
\begin{equation}\label{21p}
P(\xi_1,\xi_2)=\left\{\begin{array}{ll}
(\xi_1-\frac{1}{2},\xi_2+\frac{1}{2}),&\text{if}\quad 0\leq \xi_1\leq \frac{1}{2},\;\xi_1-\frac{1}{2}\leq\xi_2< 0,\\
(\xi_1-\frac{1}{2},\xi_2-\frac{1}{2}),&\text{if}\quad 0\leq \xi_1\leq \frac{1}{2},\;0\leq\xi_2\leq -\xi_1+\frac{1}{2},\\
(\xi_1+\frac{1}{2},\xi_2+\frac{1}{2}),&\text{if}\quad -\frac{1}{2}\leq \xi_1\leq 0 ,\;-\frac{1}{2}\leq\xi_2\leq -\xi_1-\frac{1}{2},\\
(\xi_1+\frac{1}{2},\xi_2-\frac{1}{2}),&\text{if}\quad -\frac{1}{2}\leq \xi_1\leq 0,\;\xi_1+\frac{1}{2}\leq\xi_2< +\frac{1}{2},\\
(\xi_1,\xi_2),&\text{elsewhere}.
\end{array}\right.
\end{equation}

Define $S$ as in \eqref{sbije} and observe that $S$ is unitary, since $P$ is a piecewise translation. Hence in this case it is easier to find an orthonormal basis in $L^2(\Omega_2).$ Since $D$ has sides of length $1/2$ and $1$, an orthonormal basis
of $L^2(D)$ is
$$\{\sqrt{2}\,e^{2\pi i (2k_1,k_2)\cdot \tau}\Chi_{D},\, (k_1,k_2)\in\Z^2\}.$$
If we take  bijection $s:\N\rightarrow \Z^2,$ $s(n)=(s_1(n),s_2(n)),$ we obtain the basis of $L^2(D)$
\begin{equation}\label{ortho21}
\widetilde{\psi}_{n}(\tau)=\sqrt{2}\,e^{2\pi i (2s_1(n),s_2(n))\cdot \tau}\Chi_{D}(\tau),\quad n\in\N.
\end{equation}
The image of $\{\widetilde{\psi}_{n}\}$ by $S^{-1}$ is an orthonormal basis of $L^2(\Omega):$
\begin{eqnarray}
\psi_{n}(\xi)&=&S^{-1}(\widetilde{\psi}_{n})(\xi)=\widetilde{\psi}_{n}(P(\xi))=\sqrt{2}\,e^{2\pi i (2s_1(n),s_2(n))\cdot P(\xi)}\Chi_{D}(P(\xi))\nonumber\\ &&\label{ortho21Omega} \\
&=&
\sqrt{2}\,e^{2\pi i (2s_1(n),s_2(n))\cdot \xi}\left(
\Chi_{D_1}(\xi)+(-1)^{s_2(n)}\Chi_{D_2\cup D_3}(\xi)
\right)\nonumber
\end{eqnarray}
Consequently, with $T$ as in \eqref{ndimT}, $h_0\in L^2(D)$ satisfying \eqref{coeff}, $f_0=\cF^{-1}S^{-1}h_0$ and $R=\cF^{-1}S^{-1}TS\cF$, the sequence $(R^nf_0)_{n\in\N}$ is a frame of iterations for $GPW_1(\bZ^2)$, and $R=\sum_n\lambda_n\langle\cdot,\cF^{-1}\psi_n\rangle\cF^{-1}\psi_n$ on $GPW_1(\bZ^2)$.

\section{A finite-dimensional counterpart}\label{sec:finite}

Finite-dimensional dynamical sampling has been studied in a general
operator-theoretic setting, where the aim is to recover a vector from spatial
samples of its iterates under a linear operator; see, for example,
\cite{UCACHA2017,CabrelliMolterPaternostroPhilipp}. Related space--time sampling
problems on graphs have also been considered, particularly for bandlimited
graph signals evolving under diffusion-type dynamics and for randomized
sampling schemes; see
\cite{AldroubiBaileyKrishtalMillerPetrosyan2024,
HuangNeedellTang2024,HuangLiTangYao2025,YaoHuangTang2023}. Sampling,
filtering and interpolation on finite and infinite combinatorial graphs, in
the spirit of the graph Paley--Wiener spaces used throughout this paper, have
also been developed by \cite{PesensonPesenson2010,PesensonPesenson2021}.

In the present section, we formulate the finite-graph problem in the spectral
coordinates associated with the normalized Laplacian. For dynamics of the form
\[
R=h(\mathcal L),
\]
the samples of a signal in \(GPW_\omega(G)\) are interpreted as frame
coefficients of a finite family generated by iterates of the spectral
multiplier. Stable recovery is therefore characterized by the frame property
of this family, while the singular values of its analysis matrix determine the
optimal frame bounds and the conditioning of the corresponding frame operator.

We then use the path graph \(P_N\) as a numerical model to study how the
number of temporal iterates and the spatial distribution of the sampling
vertices affect the frame bounds and conditioning. The experiments compare
one- and two-endpoint sampling and include a spatially distributed
tight-frame benchmark.

We recall here that a Path graph $P_N$ is a sequence of $N$ vertices connected in a single line, where each vertex (except the two endpoints) is connected to exactly two others, i.e. its immediate neighbors, and there are no cycles or branches.

\subsection{Basic definitions and notation}

Let \(G=(V,E)\) be a finite, connected, undirected, weighted graph with
\(|V|=N\). We assume \(N\geq 2\) and identify \(V\) with
\(\{1,\ldots,N\}\). The graph is represented by a symmetric adjacency matrix
\(A=(a_{ij})_{i,j=1}^N\), where \(a_{ij}\geq 0\) and \(a_{ij}>0\) if and only if
vertices \(i\) and \(j\) are connected. We allow weights in this section; the unweighted case of the Introduction corresponds to $a_{ij}\in\{0,1\}$.

Let \(D\) be the diagonal degree matrix,
\[
D=\operatorname{diag}(d(1),\ldots,d(N)),
\qquad
d(i)=\sum_{j=1}^N a_{ij}.
\]
Since \(G\) is connected and \(N\geq2\), we have \(d(i)>0\) for every
\(i=1,\ldots,N\). The unnormalized graph Laplacian is $\widetilde{\mathcal L}=D-A.$
We work with the normalized Laplacian

\begin{equation}\label{normL}
\mathcal L
=
D^{-1/2}\widetilde{\mathcal L}D^{-1/2}
=
I-D^{-1/2}AD^{-1/2}.
\end{equation}

Since \(\mathcal L\) is real and symmetric, it admits an orthonormal spectral
decomposition $\mathcal L=U\Lambda U^*,$
where
\[
U=[\chi_0,\chi_1,\ldots,\chi_{N-1}]
\]
is unitary and
\[
\Lambda=\operatorname{diag}(\lambda_0,\lambda_1,\ldots,\lambda_{N-1})
\]
with
\[
0=\lambda_0<\lambda_1\leq \cdots \leq \lambda_{N-1}\leq 2.
\]
The strict inequality \(\lambda_0<\lambda_1\) follows from the connectedness of
\(G\). For the normalized Laplacian, the eigenvector associated with
\(\lambda_0=0\) is
\[
\chi_0(i)
=
\frac{\sqrt{d(i)}}{\sqrt{\operatorname{vol}(G)}},
\qquad
\operatorname{vol}(G)=\sum_{i=1}^N d(i).
\]
In particular, \(\chi_0\) is constant if and only if \(G\) is regular.

A graph signal is a function \(f:V\to\mathbb C\), identified with a vector
\(f\in\mathbb C^N\). The {\em graph Fourier transform} of \(f\) is defined by
\begin{equation}\label{gft}
\widehat f(\ell)
=
\langle f,\chi_\ell\rangle
=
\sum_{i=1}^N f(i)\overline{\chi_\ell(i)},
\qquad
\ell=0,\ldots,N-1.
\end{equation}
Equivalently,
\[
\widehat f=U^*f.
\]
The inverse graph Fourier transform is
\[
f
=
\sum_{\ell=0}^{N-1}\widehat f(\ell)\chi_\ell
=
U\widehat f.
\]
Moreover, Parseval's identity gives
\[
\langle f,g\rangle_{\mathbb C^N}
=
\langle \widehat f,\widehat g\rangle_{\mathbb C^N}.
\]
\subsection{Bandlimited signals and dynamical measurements on finite graphs}

We now introduce the finite-dimensional analogue of the graph
Paley-Wiener spaces. For $\omega\in[0,2]$, we define
\[
GPW_\omega(G)
=
\operatorname{span}\{\chi_\ell:\lambda_\ell\le \omega\},
\]
and, since the graph $G$ is fixed throughout this section, we simply write $GPW_\omega$ for $GPW_\omega(G)$ from now on.
Equivalently,
\[
f\in GPW_\omega
\quad\Longleftrightarrow\quad
\widehat f(\ell)=0
\quad\text{whenever }\lambda_\ell>\omega.
\]

Let
\[
\Lambda_\omega
=
\{\ell:\lambda_\ell\le \omega\}
=
\{0,\ldots,m-1\},
\]
where $m$ is the cardinality of $\Lambda_\omega$. Thus
\[
GPW_\omega
=
\operatorname{span}\{\chi_0,\ldots,\chi_{m-1}\},
\]
and every $f\in GPW_\omega$ admits the expansion
\[
f
=
\sum_{\ell\in\Lambda_\omega}
\widehat f(\ell)\chi_\ell.
\]

Since
\[
\widehat{\chi_j}=e_j,
\qquad
j=0,\ldots,N-1,
\]
where $\{e_j\}_{j=0}^{N-1}$ denotes the canonical basis of
$\mathbb C^N$, we identify
\[
\widehat{GPW_\omega}
=
\operatorname{span}\{e_0,\ldots,e_{m-1}\} \subseteq \bC^N.
\]

Let $h:[0,2]\to\mathbb C$ be a bounded function and define the graph
dynamical operator
\[
R:\ell^2(G)\longrightarrow\ell^2(G)
\]
by
\[
R
=
h(\mathcal L)
=
Uh(\Lambda)U^*
=
UM_\gamma U^*,
\]
where
\[
\gamma
=
\bigl(h(\lambda_0),\ldots,h(\lambda_{N-1})\bigr),
\]
and $M_\gamma$ denotes the diagonal multiplication operator
\[
(M_\gamma g)(i)
=
h(\lambda_i)g(i),
\qquad
i=0,\ldots,N-1.
\]
Consequently,
\[
R^*
=
UM_{\overline\gamma}U^*.
\]

Observe that $GPW_\omega$ is invariant under both $R$ and $R^*$,
so both operators can be restricted to this subspace.

For every $f,g\in GPW_\omega$ and every $n\ge0$,
\begin{align}
\langle R^n f,g\rangle
&=
\langle f,(R^*)^n g\rangle
\notag\\
&=
\langle f,UM_{\overline\gamma}^nU^*g\rangle
\notag\\
&=
\langle f,UM_{\overline\gamma}^n\widehat g\rangle
\notag\\
&=
\langle\widehat f,
M_{\overline\gamma}^n\widehat g\rangle,
\label{igualdad}
\end{align}
where $\widehat f=U^*f$.

Let
$S\subseteq V$ be a sampling set, $g_i\in GPW_\omega\ (i\in S)$,
\text{ and } $K=\{0,\dots,k-1\}$.

A signal $f\in GPW_\omega$ can be recovered stably from the
measurements
\[
\bigl\{
\langle f,(R^*)^ng_i\rangle:
n\in K,\;
i\in S
\bigr\},
\]
provided that
\[
\bigl\{
(R^*)^ng_i:
n\in K,\;
i\in S
\bigr\}
\]
is a frame for $GPW_\omega$.

Using equation \eqref{igualdad}, we get
\[
\langle f,(R^*)^ng_i\rangle
=
\langle
\widehat f,
M_{\overline\gamma}^n\widehat g_i
\rangle,
\]
and hence the above is equivalent to requiring that
\[
Q
=
\{
M_{\overline\gamma}^n\widehat g_i:
n\in K,\;
i\in S
\}
\]
is a frame for $\widehat{GPW_\omega}$.

If $\{q_{n,i}\}$ is any dual frame of $Q$, then
\[
\widehat f
=
\sum_{n\in K}
\sum_{i\in S}
\langle
\widehat f,
M_{\overline\gamma}^n\widehat g_i
\rangle
q_{n,i},
\]
and applying $U$ yields the reconstruction formula
\[
f
=
\sum_{n\in K}
\sum_{i\in S}
\langle
f,
(R^*)^ng_i
\rangle
Uq_{n,i}
=
\sum_{n\in K}
\sum_{i\in S}
\langle
R^nf,
g_i
\rangle
Uq_{n,i}.
\]

Finally, since
\[
\widehat f(j)
=
\widehat g_i(j)
=
0,
\qquad
j=m,\ldots,N-1,
\]
the operator $M_{\overline\gamma}$ may be identified with its
restriction to the first $m$ coordinates,
\[
M_{\overline\gamma}
=
\operatorname{diag}
\bigl(
h(\lambda_0),
\ldots,
h(\lambda_{m-1})
\bigr).
\]
Likewise, all vectors in
$\widehat{GPW_\omega}$ may be viewed as elements of
$\mathbb C^m$. After recovering the first $m$ Fourier coefficients of
$\widehat f$, the remaining coefficients are known to vanish, so
$\widehat f$, and hence $f$, are completely determined.

From now on, we shall identify $M_{\overline{\gamma}}$ with its restriction
to $\mathbb{C}^m$, and every vector
$\widehat{f},\widehat{g}_i\in\widehat{GPW_\omega(G)}$ with its restriction to
the first $m$ coordinates. Thus, whenever these objects are mentioned in the
sequel, their restrictions to $\mathbb{C}^m$ are understood.
 
 The case of interest is $g_i:=P_{GPW_\omega}\delta_i$, the orthogonal projection of the Dirac delta at the vertex $i$, for which $\widehat{g_i}(\ell)=\overline{\chi_\ell(i)}$ for $\ell\in\Lambda_\omega$ and $\langle R^nf,g_i\rangle=(R^nf)(i)$ is a vertex sample of the evolved signal; this is the choice made in the next Subsection~(\ref{subsec:finite-numerics}).

Since $M_{\overline\gamma}$ is a diagonal  operator,
the characterization of when the system
\[
Q
=
\{
M_{\overline\gamma}^n\widehat g_i
\}
\]
forms a frame is available from
\cite[Theorem~2.2]{UCACHA2017}, that we state here for completeness

\begin {theorem}[cf.~\cite{UCACHA2017}, Theorem 2.2]\label{DS}

Let $S \subset \{1, \dots, m\}$ and let $\{b_i: i \in S \}$ be vectors in $\bC^m$. Let $D$ be a diagonal matrix and let $r_i$ be the degree of the $D$-annihilator of $b_i$. Set $\ell_i=r_i-1$. Then  $\{D^{j}b_i: \; i\in S,  \, j=0, \dots, \ell_i\}$ is a frame of $\bC^m$ if and only if $\{P_j(b_i):i \in S\}$ is a frame of $P_j(\bC^m)$ for every eigenvalue $\lambda_j$ of $D$.
\end{theorem}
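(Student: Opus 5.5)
Since $\bC^m$ is finite dimensional, a finite family is a frame exactly when it spans, so the statement reduces to a spanning criterion. I would first set up notation: let $\lambda_1,\dots,\lambda_s$ be the \emph{distinct} eigenvalues of the diagonal matrix $D$, let $P_j$ be the orthogonal projection onto the eigenspace $E_j=\ker(D-\lambda_j I)$, and for each $i\in S$ let $p_i$ be the $D$-annihilator of $b_i$, the monic polynomial of least degree $r_i$ with $p_i(D)b_i=0$. The first step is to show that $\mathcal O_i:=\operatorname{span}\{D^jb_i: j=0,\dots,\ell_i\}$ equals the full cyclic space $Z_i:=\operatorname{span}\{D^jb_i: j\ge 0\}$. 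By minimality of $p_i$, the vectors $b_i,\dots,D^{r_i-1}b_i$ are independent, and $D^{r_i}b_i$ is a combination of them via $p_i(D)b_i=0$. By induction every higher power also lies in $\mathcal O_i$. Hence the family in the statement spans $\bC^m$ iff $\sum_{i\in S}Z_i=\bC^m$.

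The second step is to describe $Z_i$ through the eigenspace decomposition $b_i=\sum_j P_jb_i$. Clearly $Z_i\subseteq \operatorname{span}\{P_jb_i\}_j$, since $D^kb_i=\sum_j\lambda_j^kP_jb_i$. For the reverse inclusion I would use Lagrange interpolation: the polynomial $q_j(z)=\prod_{k\neq j}(z-\lambda_k)/(\lambda_j-\lambda_k)$ satisfies $q_j(D)=P_j$. Therefore $P_jb_i=q_j(D)b_i\in Z_i$, which gives
\[
Z_i=\operatorname{span}\{P_jb_i: j=1,\dots,s\}.
\]
(Equivalently, $p_i(z)=\prod_{j:\,P_jb_i\neq0}(z-\lambda_j)$, so $r_i$ counts the eigenspaces that $b_i$ meets.)

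The third step combines these facts. From the second step, $\sum_iZ_i=\operatorname{span}\{P_jb_i: i\in S,\ j\}=\bigoplus_j \operatorname{span}\{P_jb_i:i\in S\}$, because the vectors $P_jb_i$ lie in the mutually orthogonal spaces $E_j$ and $\bC^m=\bigoplus_jE_j$. So $\sum_iZ_i=\bC^m$ iff $\operatorname{span}\{P_jb_i:i\in S\}=E_j=P_j(\bC^m)$ for every $j$. In finite dimensions this is exactly the condition that $\{P_jb_i\}_{i\in S}$ is a frame of $P_j(\bC^m)$, which proves both directions of the equivalence.

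The only delicate point is bookkeeping rather than analysis. One must use \emph{distinct} eigenvalues so that the $P_j$ are spectral projections and the interpolation polynomials exist. One must also check that truncating at $\ell_i=r_i-1$ loses nothing, which is the content of the first step. No frame-bound estimates are needed, since the statement is purely qualitative.
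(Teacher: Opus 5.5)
The paper gives no proof of this statement. It is quoted, for completeness, from \cite{UCACHA2017}, so there is no in-text argument to compare yours against.

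Your proof is correct and complete, and it is the standard argument for this result:
\begin{enumerate}
\item In finite dimensions a finite family is a frame exactly when it spans, so the question becomes one of spanning.
\item The annihilator shows that the truncated orbit $\{D^jb_i\}_{j\le \ell_i}$ already spans the full cyclic space $Z_i$.
\item The cyclic space is identified as $Z_i=\operatorname{span}\{P_jb_i\}_j$. This uses $D^kb_i=\sum_j\lambda_j^kP_jb_i$ together with the Lagrange interpolants $q_j(D)=P_j$, which is equivalent to invertibility of the Vandermonde matrix of the distinct eigenvalues.
\item The orthogonal decomposition $\bC^m=\bigoplus_j E_j$ then gives the equivalence.
\end{enumerate}

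Your argument also silently covers the degenerate case $b_i=0$: there the annihilator is $p_i\equiv 1$, so $r_i=0$ and the orbit is empty. Your remark that $r_i$ counts the eigenspaces met by $b_i$ is exactly what the paper relies on afterwards. It is why a single vector $g$ with $\widehat g(\ell)\neq 0$ for all $\ell\in\Lambda_\omega$ requires precisely the powers $0,\dots,m-1$ when $h(\lambda_0),\dots,h(\lambda_{m-1})$ are pairwise distinct.
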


Recall that for a square matrix $Q \in \bC^{m\times m}$ the {\em $Q$-annihilator $p_b$ of a vector $b\in \bC^m$} is the monic polynomial of smallest degree, such that $p_b(Q)b \equiv 0$. 
Here $P_j$ denotes
the orthogonal projection in $\C^m$ onto the eigenspace of $D$ associated to the eigenvalue $\lambda_j$.

In our case we need to find functions  $\{g_i\in GPW_{\omega}, \; i\in S\}$
such that $\{\widehat{g_i}: i\in S\} $ restricted to $\bC^m$ satisfy the conditions of Theorem \ref{DS}.
In particular, if the values $h(\lambda_0),\dots,h(\lambda_{m-1})$ are pairwise distinct, all eigenspaces of $M_{\overline\gamma}$ are one-dimensional and Theorem~\ref{DS} shows that a single vector $g$ suffices, provided $\widehat g(\ell)\neq0$ for every $\ell\in\Lambda_\omega$; then $m-1$ iterations are needed and sufficient. For $g=P_{GPW_\omega}\delta_i$ this means $\chi_\ell(i)\ne0$ for all $\ell\in\Lambda_\omega$.

\begin{remark}
More generally, the operator $M_{\overline{\gamma}}$ may be replaced by any bounded linear
operator whose iterates satisfy the hypotheses of
\cite[Corollary~2.10]{UCACHA2017}. Consequently, the same reconstruction
result holds in this more general setting.
\end{remark}



\subsection{Analytical frame bounds and numerical results}
\label{subsec:finite-numerics}

Our goal is to understand how the quality of a finite dynamical frame
improves as we include more and more iterates of the dynamics.

\paragraph{\bf The setting.}
We work on the path graph \(P_{40}\) with bandwidth parameter
\(\omega = 0.1\). For this choice, the space of \(\omega\)-bandlimited
signals has dimension
\[
\dim GPW_\omega(P_{40}) = 6 ,
\]
so every signal we consider is determined by six coefficients.
The dynamics is the spectral multiplier
\[
  h(\lambda) = 0.95\,e^{-20\lambda},
\]
that is, the evolution operator acts on each eigenvector \(\chi_\ell\)
by multiplication by \(h(\lambda_\ell)\).

The dynamics is the spectral multiplier
\[
h(\lambda) = 0.95\,e^{-20\lambda},
\]
that is, the evolution operator acts on each eigenvector \(\chi_\ell\)
by multiplication by \(h(\lambda_\ell)\).
\paragraph{\bf Active frequencies.}
Only finitely many frequencies are present in \(GPW_\omega(P_{40})\).
Denote this \emph{active frequency set} by
\[
\Lambda_\omega = \{\ell_1,\ldots,\ell_{m_\omega}\}, 
\]
and write the associated multiplier values as
\[
h_r := h(\lambda_{\ell_r}),
\qquad r = 1,\ldots,m_\omega .
\]
So \(h_r\) is the factor by which the eigenvector \(\chi_{\ell_r}\) is damped in one step of the dynamics.

\paragraph{\bf The analysis matrix.}
Fix a sampling set \(S \subseteq V(P_{40})\) and a number of iterates
\(K \geq 1\). The experiment we perform is: sample the signal at every
vertex \(i \in S\), at every time \(n = 0,1,\ldots,K-1\). This produces
\(|S|\,K\) measurements in total.

Each measurement is a linear functional of the six unknown coefficients,
and collecting these functionals as rows gives the
\emph{analysis matrix} \(\Theta_{S,K}\). Its rows are indexed by the
space-time pair \((i,n)\) and its columns by the frequency index \(r\):
\[
\Theta_{S,K}
=
\bigl[\, \chi_{\ell_r}(i)\, h_r^{\,n} \,\bigr]_{(i,n),\,r}
\in \mathbb{C}^{|S|K \times m_\omega},
\qquad
i \in S,\quad 0 \le n < K,\quad 1 \le r \le m_\omega .
\]
The entry \(\chi_{\ell_r}(i)\,h_r^{\,n}\) has a direct interpretation:
\(\chi_{\ell_r}(i)\) is the value of the \(r\)-th eigenvector at the
sampling vertex \(i\), and \(h_r^{\,n}\) is the damping accumulated by
that eigenvector after \(n\) steps.

\paragraph{\bf A remark on ordering.}
The rows of \(\Theta_{S,K}\) may be listed in any order, for instance
vertex-by-vertex or time-by-time. Permuting rows changes neither the
rank nor the singular values of the matrix, and hence has no effect on
the frame bounds. Any convenient ordering may therefore be used.


If
\[
\widehat f_\omega
=
\bigl(
\widehat f(\ell_1),\ldots,
\widehat f(\ell_{m_\omega})
\bigr)^T,
\]
then \(\Theta_{S,K}\widehat f_\omega\) is the vector of dynamical samples
\[
\{(R^nf)(i):i\in S,\ n=0,\ldots,K-1\}.
\]

The associated frame operator is
\[
\mathcal S_{S,K}
=
\Theta_{S,K}^*\Theta_{S,K}.
\]
Its entries can be written explicitly as
\[
(\mathcal S_{S,K})_{r,s}
=
\sum_{i\in S}
\overline{\chi_{\ell_r}(i)}
\chi_{\ell_s}(i)
\Phi_K(\overline{h_r}h_s),
\]
where
\[
\Phi_K(z)
=
\sum_{n=0}^{K-1}z^n
=
\begin{cases}
\dfrac{1-z^K}{1-z},
& z\neq1,\\[2ex]
K,
& z=1.
\end{cases}
\]
Thus the frame operator is known analytically for every sampling set
\(S\) and every number of iterates \(K\).

We define
\[
A_{S,K}
=
\lambda_{\min}(\mathcal S_{S,K}),
\qquad
B_{S,K}
=
\lambda_{\max}(\mathcal S_{S,K}).
\]
Equivalently,
\[
A_{S,K}
=
\sigma_{\min}(\Theta_{S,K})^2,
\qquad
B_{S,K}
=
\sigma_{\max}(\Theta_{S,K})^2,
\]
where zero singular values are included. The sampling family is a frame
precisely when \(A_{S,K}>0\), or equivalently when
\(\Theta_{S,K}\) has full column rank. In this case,
\(A_{S,K}\) and \(B_{S,K}\) are the optimal frame bounds, and the condition
number of the frame operator is
\[
\kappa(\mathcal S_{S,K})
=
\frac{B_{S,K}}{A_{S,K}}.
\]

Let
\[
v_{i,K}
=
\bigl(
\chi_{\ell_1}(i)h_1^K,\ldots,
\chi_{\ell_{m_\omega}}(i)h_{m_\omega}^K
\bigr)
\]
be the row contributed by vertex \(i\) at the new iterate \(K\).
Adding one further iterate appends these rows to the analysis matrix and
gives
\[
\mathcal S_{S,K+1}
=
\mathcal S_{S,K}
+
\sum_{i\in S}v_{i,K}^*v_{i,K}.
\]
Since the additional term is positive semidefinite,
\[
A_{S,K+1}\geq A_{S,K},
\qquad
B_{S,K+1}\geq B_{S,K}.
\]
Thus additional iterates cannot decrease the lower frame bound. The
condition number, however, need not improve and must be examined
separately.

We compare sampling at one endpoint,
\[
S_1=\{1\},
\]
with sampling at both endpoints,
\[
S_2=\{1,40\}.
\]
We also consider the spatially distributed set
\[
S_{\mathrm d}
=
\{1,4,7,\ldots,40\},
\]
which contains fourteen sampling vertices.

For the normalized Laplacian of \(P_{40}\), the eigenvalues are
\[
\lambda_\ell
=
1-\cos\left(\frac{\pi\ell}{39}\right),
\qquad
\ell=0,\ldots,39.
\]
Since
\[
\lambda_5<0.1<\lambda_6,
\]
the active frequency set is
\[
\Lambda_\omega=\{0,1,2,3,4,5\},\qquad m_\omega=6.
\]

For these frequencies, the normalized eigenvectors may be chosen as
\[
\chi_0(i)
=
\frac{\sqrt{d(i)}}{\sqrt{78}},
\]
and
\[
\chi_\ell(i)
=
\sqrt{\frac{d(i)}{39}}
\cos\left(
\frac{\pi\ell(i-1)}{39}
\right),
\qquad
\ell=1,\ldots,5,
\]
where \(d(1)=d(40)=1\) and \(d(i)=2\) for
\(2\leq i\leq39\) (these are the eigenpairs of the normalized Laplacian of the path graph; they can be verified directly from \eqref{normL}).

The multiplier $h$ is injective, so the values $h_0,\dots,h_5$ are distinct, and $\chi_\ell(1)\neq0$ for all $\ell$ (in fact $\chi_0(1)=\sqrt{1/78}$, while $\chi_\ell(1)=\sqrt{1/39}$ for $\ell=1,\dots,5$; similarly $\chi_\ell(40)=\pm\sqrt{1/78}$ or $\pm\sqrt{1/39}$); by Theorem~\ref{DS} and the discussion following it, sampling at one endpoint yields a frame exactly when $K\ge m_\omega=6$, and at both endpoints when $K\geq3$. The numerical results below confirm this and quantify the conditioning.


On the vertices of \(S_{\mathrm d}\), which have the form
\[
i=1+3j,
\qquad
j=0,\ldots,13,
\]
the discrete cosine orthogonality relations give
\[
\sum_{i\in S_{\mathrm d}}
\overline{\chi_r(i)}\chi_s(i)
=
\frac13\delta_{rs},
\qquad
r,s=0,\ldots,5.
\]
Consequently, the frame operator associated with the distributed
configuration is diagonal:
\[
\mathcal S_{S_{\mathrm d},K}
=
\frac13
\operatorname{diag}
\left(
\Phi_K(|h_0|^2),\ldots,\Phi_K(|h_5|^2)
\right).
\]
The corresponding optimal frame bounds are therefore
\[
A_{S_{\mathrm d},K}
=
\frac13
\min_{0\leq r\leq5}
\Phi_K(|h_r|^2),
\]
and
\[
B_{S_{\mathrm d},K}
=
\frac13
\max_{0\leq r\leq5}
\Phi_K(|h_r|^2).
\]

For the multiplier considered here, \(h(\lambda)\) is positive and strictly
decreasing. Hence
\[
A_{S_{\mathrm d},K}
=
\frac13
\frac{
1-h(\lambda_5)^{2K}
}{
1-h(\lambda_5)^2
},
\]
whereas
\[
B_{S_{\mathrm d},K}
=
\frac13
\frac{
1-h(\lambda_0)^{2K}
}{
1-h(\lambda_0)^2
}.
\]

In particular, for \(K=1\),
\[
\mathcal S_{S_{\mathrm d},1}
=
\frac13I_6,
\]
and therefore
\[
A_{S_{\mathrm d},1}
=
B_{S_{\mathrm d},1}
=
\frac13,
\qquad
\frac{B_{S_{\mathrm d},1}}
     {A_{S_{\mathrm d},1}}
=
1.
\]
Thus the distributed sampling family is exactly tight. After multiplying
its vectors by \(\sqrt{3}\), it becomes a Parseval frame. The numerical
relative residual
\[
\frac{
\left\|
\mathcal S_{S_{\mathrm d},1}-\frac13I_6
\right\|_{\mathrm F}
}{
\left\|\mathcal S_{S_{\mathrm d},1}\right\|_{\mathrm F}
}
\approx
7.1\times10^{-16}
\]
is consistent with this exact identity.

At \(K=20\), the analytical formulas give
\[
A_{S_{\mathrm d},20}
\approx
0.346,
\qquad
B_{S_{\mathrm d},20}
\approx
2.979,
\]
and
\[
\frac{B_{S_{\mathrm d},20}}
     {A_{S_{\mathrm d},20}}
\approx
8.61.
\]
Hence the lower frame bound increases, but the condition number moves away
from its optimal value \(1\). This shows analytically that additional
iterates do not preserve tightness for this configuration.

For the one- and two-endpoint configurations, the entries of the frame
operator are still given explicitly by the formula above, but its extremal
eigenvalues do not reduce to equally simple closed expressions. We
therefore compute them numerically by diagonalizing the corresponding
\(6\times6\) Hermitian matrices.

Figure~\ref{fig:frame-bounds-iterations} shows the lower frame bound and
the condition number of the frame operator as functions of the number of
iterates. Both quantities are displayed on logarithmic vertical scales in
order to make the behavior of the three sampling configurations visible
over their different orders of magnitude.

\begin{figure}[htbp]
    \centering
    \includegraphics[width=0.85\textwidth]
    {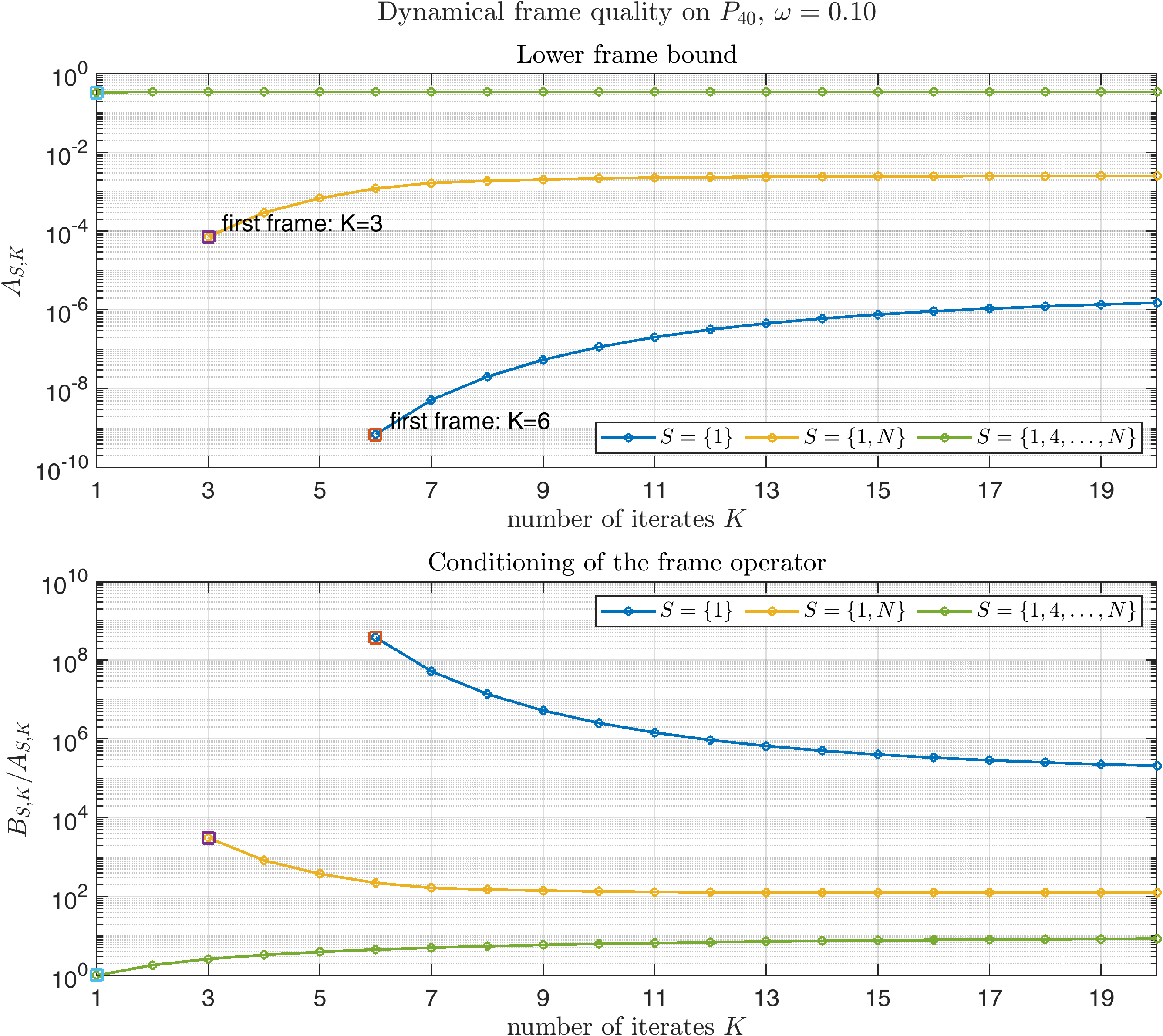}
    \caption{Quality of the finite dynamical frames on \(P_{40}\), with
    \(\omega=0.1\) and \(h(\lambda)=0.95e^{-20\lambda}\). The upper panel
    shows the optimal lower frame bound \(A_{S,K}\), while the lower panel
    shows the condition number \(B_{S,K}/A_{S,K}\) of the frame operator.
    Both vertical axes use logarithmic scales. The distributed sampling set
    \(S_{\mathrm d}=\{1,4,7,\ldots,40\}\) gives an exact tight frame at
    \(K=1\).}
    \label{fig:frame-bounds-iterations}
\end{figure}

For the single-endpoint configuration \(S_1=\{1\}\), the analysis matrix
first attains full column rank at \(K=6\). At this value,
\[
A_{S_1,6}
\approx
7.02\times10^{-10},
\qquad
B_{S_1,6}
\approx
2.64\times10^{-1},
\]
and hence
\[
\frac{B_{S_1,6}}{A_{S_1,6}}
\approx
3.77\times10^8.
\]
The sampling family therefore spans the active spectral space, but the
resulting frame is extremely ill-conditioned. Increasing the number of
iterates to \(K=20\) gives
\[
A_{S_1,20}
\approx
1.53\times10^{-6},
\qquad
B_{S_1,20}
\approx
3.19\times10^{-1},
\]
and
\[
\frac{B_{S_1,20}}{A_{S_1,20}}
\approx
2.09\times10^5.
\]
Thus the additional iterates substantially improve both the lower frame
bound and the conditioning, although the frame remains poorly conditioned.
This is a Vandermonde-type phenomenon: the rows contributed by a single vertex
are $(\chi_r(1)h_r^n)_r$, and the conditioning is governed by the spread of
$h_0,\dots,h_5$; with $h(\lambda)=0.95e^{-20\lambda}$ one has $h_5\approx0.19$, so
the last columns are tiny. The decay rate $20$ in $h$ therefore strongly
influences the numbers reported here; a slower decay improves the conditioning
of the single-vertex scheme at the price of a slower gain from additional
iterates.

For the two-endpoint configuration \(S_2=\{1,40\}\), full column rank is
already obtained at \(K=3\). In this case,
\[
A_{S_2,3}
\approx
7.30\times10^{-5},
\qquad
B_{S_2,3}
\approx
2.28\times10^{-1},
\]
and
\[
\frac{B_{S_2,3}}{A_{S_2,3}}
\approx
3.12\times10^3.
\]
At their respective first full-rank values, the one- and two-endpoint
systems contain the same total number of sampling vectors:
\[
|S_1|\cdot6
=
|S_2|\cdot3
=
6.
\]
Nevertheless,
\[
\frac{A_{S_2,3}}{A_{S_1,6}}
\approx
1.04\times10^5.
\]
Thus, with the same total number of measurements, distributing the samples
over two vertices gives a lower frame bound approximately \(10^5\) times
larger than collecting all temporal samples at a single vertex.

At \(K=20\), the two-endpoint configuration gives
\[
A_{S_2,20}
\approx
2.55\times10^{-3},
\qquad
B_{S_2,20}
\approx
3.28\times10^{-1},
\]
and
\[
\frac{B_{S_2,20}}{A_{S_2,20}}
\approx
1.29\times10^2.
\]
The two-endpoint configuration is therefore substantially better
conditioned than the one-endpoint configuration throughout the considered
range of iterates.

We include the distributed configuration as a reference benchmark, but we do not interpret it as a direct comparison with the single-vertex experiments. Indeed, it uses fourteen spatial
samples at \(K=1\), whereas the first full-rank one- and two-endpoint
systems each contain six measurements. It nevertheless demonstrates that
a suitable spatial distribution of the sampling vertices can produce an
optimally conditioned frame.

The principal values are summarized in
Table~\ref{tab:frame-bounds-iterations}.

\begin{table}[htbp]
\centering
\begin{tabular}{c|c|c|c|c}
\hline
Sampling set \(S\)
& \(K\)
& \(A_{S,K}\)
& \(B_{S,K}\)
& \(B_{S,K}/A_{S,K}\)
\\
\hline
\(\{1\}\)
& \(6\)
& \(7.02\times10^{-10}\)
& \(2.64\times10^{-1}\)
& \(3.77\times10^{8}\)
\\
\(\{1\}\)
& \(20\)
& \(1.53\times10^{-6}\)
& \(3.19\times10^{-1}\)
& \(2.09\times10^{5}\)
\\
\(\{1,40\}\)
& \(3\)
& \(7.30\times10^{-5}\)
& \(2.28\times10^{-1}\)
& \(3.12\times10^{3}\)
\\
\(\{1,40\}\)
& \(20\)
& \(2.55\times10^{-3}\)
& \(3.28\times10^{-1}\)
& \(1.29\times10^{2}\)
\\
\(\{1,4,7,\ldots,40\}\)
& \(1\)
& \(1/3\)
& \(1/3\)
& \(1\)
\\
\(\{1,4,7,\ldots,40\}\)
& \(20\)
& \(3.46\times10^{-1}\)
& \(2.98\)
& \(8.61\)
\\
\hline
\end{tabular}
\vspace{3mm}
\caption{Optimal frame bounds and frame-operator condition numbers for the
three dynamical sampling configurations on \(P_{40}\). The bounds for the
distributed configuration follow from the analytical formulas, while the
endpoint bounds are obtained by diagonalizing the corresponding explicit
frame matrices.}
\label{tab:frame-bounds-iterations}
\end{table}

In all three configurations, additional iterates do not decrease the lower
frame bound, in agreement with the monotonicity of the frame operators.
For the one- and two-endpoint configurations, they also substantially
improve the conditioning. In contrast, the distributed configuration is
already exactly tight at \(K=1\), and additional iterates move its
condition number away from the optimal value \(1\). Thus increasing
temporal redundancy does not necessarily improve tightness or conditioning.

\newpage
\section{Appendix}\label{apendix}

The following Lemma provides an orthonormal basis of $GPW_\omega(\Z).$
\begin{lemma*}\label{repro}
For a fixed $\ell\in\bN$ such that $\omega=\varphi(\frac{1}{2\ell})=1-\cos(\frac{\pi}{\ell}),$ and any  $j\in\bZ,$ define
$$g_{j}(m)=\frac{1}{\sqrt{\ell}}\,\sinc\left(\frac{m}{\ell}-j\right),\quad m\in\bZ$$
then $\{g_j\}_{j\in\mathbb{Z}}$ is an orthonormal basis of $GPW_{\omega}(\Z),$ and, for any $f\in GPW_\omega(\bZ),$
\begin{equation}\label{repker}
\langle f,\,g_{j}\rangle_{\ell^2(\bZ)}=\sqrt{\ell}\,f(\ell j).
\end{equation}
\end{lemma*}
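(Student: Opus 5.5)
The plan is to transport everything to the Fourier side via the unitary map $\cF:\ell^2(\bZ)\to L^2(\T)$. Earlier in the section we saw that $\cF(GPW_\omega(\bZ))=L^2(\Omega)$ with $\Omega=[-\frac{1}{2\ell},\frac{1}{2\ell}]$, since $\omega=\f(1/2\ell)$ and $\f$ is even and increasing on $[0,1/2]$. In the proof of Theorem~\ref{ShannotheoremZ} we also recalled that the functions
\[
e_j(\xi)=\sqrt{\ell}\,\chi_{\Omega}(\xi)\,e^{2\pi i \ell j\xi},\qquad j\in\bZ,
\]
form an orthonormal basis of $L^2(\Omega)$. This holds because $|\Omega|=1/\ell$, and the exponentials $e^{2\pi i \ell j \xi}$ restricted to an interval of length $1/\ell$ are the standard Fourier basis there. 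Since $\cF^{-1}$ is unitary and maps $L^2(\Omega)$ onto $GPW_\omega(\bZ)$, the family $\{\cF^{-1}e_j\}_{j\in\bZ}$ is an orthonormal basis of $GPW_\omega(\bZ)$. So the proof reduces to identifying $\cF^{-1}e_j$ with $g_j$.

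For that identification I would compute directly, as in \eqref{chiinv2}:
\[
(\cF^{-1}e_j)(m)=\int_{\T}e_j(\xi)e^{-2\pi i m\xi}\,d\xi=\sqrt{\ell}\int_{-\frac1{2\ell}}^{\frac1{2\ell}}e^{-2\pi i(m-\ell j)\xi}\,d\xi .
\]
This equals $\sqrt{\ell}\cdot\frac{1}{\ell}\sinc\!\left(\frac{m-\ell j}{\ell}\right)=\frac{1}{\sqrt\ell}\sinc\!\left(\frac m\ell-j\right)=g_j(m)$. The case $m=\ell j$ is covered by the convention $\sinc(0)=1$. This gives the orthonormal basis claim.

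For \eqref{repker} I would use Parseval. For $f\in GPW_\omega(\bZ)$,
\[
\langle f,g_j\rangle_{\ell^2(\bZ)}=\langle \cF f,e_j\rangle_{L^2(\T)}=\sqrt{\ell}\int_{\Omega}\cF f(\xi)e^{-2\pi i \ell j\xi}\,d\xi .
\]
Because $\cF f$ vanishes off $\Omega$, the last integral equals $\sqrt{\ell}\int_{\T}\cF f(\xi)e^{-2\pi i\ell j\xi}\,d\xi=\sqrt{\ell}\,f(\ell j)$ by the inversion formula $f(m)=\int_\T \cF f(\xi)e^{-2\pi i m\xi}\,d\xi$.

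The only delicate point is checking that the inverse transform used here matches the sign convention in \eqref{FT}. With $\cF f(\xi)=\sum_k f(k)e^{2\pi i k\xi}$, the adjoint, and hence the inverse, is $h\mapsto \int_\T h(\xi)e^{-2\pi i m\xi}d\xi$, which is consistent with the computations above. No real obstacle remains beyond this bookkeeping.
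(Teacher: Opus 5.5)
Your proof is correct, but it takes a more direct route than the paper.

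\textbf{Your route.} You get orthonormality and completeness in one step by transporting the exponential basis $\sqrt{\ell}\,\chi_{\Omega}(\xi)e^{2\pi i\ell j\xi}$ of $L^2(\Omega)$ through the unitary $\cF^{-1}$. Then \eqref{repker} is just Parseval plus Fourier inversion, using that $\cF f$ vanishes off $\Omega$.

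\textbf{The paper's route.} The paper computes $\cF(g_j)$ only to see that $g_j\in GPW_\omega(\bZ)$, and checks $\langle g_p,g_j\rangle=\delta_{p,j}$ by a direct integral. It then relies on the sampling theorem (Theorem~\ref{ShannotheoremZ}) for the rest:
\begin{itemize}
\item it proves \eqref{repker} by inserting $f(m)=\sum_p f(\ell p)\sinc(m/\ell-p)$ into $\langle f,g_j\rangle$, exchanging the sums over $m$ and $p$, and using orthonormality;
\item it deduces completeness from \eqref{repker} together with the sampling formula: if $\langle f,g_j\rangle=0$ for all $j$, then $f(\ell j)=0$ for all $j$, hence $f=0$.
\end{itemize}

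\textbf{Comparison.} Your version is self-contained. It needs no interchange of sums, which the paper justifies only implicitly through the $\ell^2$-convergence of the sampling series. It also avoids invoking \eqref{repker} for completeness before \eqref{repker} has been proved. It even shows that the sampling theorem can be read off from the lemma, since $f=\sum_j\langle f,g_j\rangle g_j=\sum_j f(\ell j)\sinc(\cdot/\ell-j)$. The paper's route instead presents the basis as the natural companion of the sampling formula it has just proved. Ultimately both rest on the same fact: the exponentials form a complete system on an interval of length $1/\ell$, which is already used in the proof of Theorem~\ref{ShannotheoremZ}.
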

\begin{proof}
If  $f\in GPW_\omega(\bZ),$ then $\mcU f$ has support in $[0,\omega]$ which means $\cF{f}$ has support in $\Omega=[-\frac{1}{2\ell},\frac{1}{2\ell}],$ where $$\omega=\varphi(\frac{1}{2\ell})=1-\cos(\frac{\pi}{\ell}).$$
By \eqref{chiinv2} we have
\begin{eqnarray*}
\sinc(\frac{m}{\ell}-j)&=&\ell\int_{-1/2}^{1/2} \chi_{\ds \Omega}(\xi) e^{-2\pi i \xi(m-\ell j)} \, d\xi=\ell\int_{-1/2}^{1/2} \chi_{\ds \Omega}(\xi)\, e^{2\pi i \ell j\xi}\,e^{-2\pi i m\xi}  \, d\xi\\ \\
& =&
\ell \cF^{-1}\left(\chi_{\ds \Omega}\, e^{2\pi i \ell j\cdot}\right) (m),
\end{eqnarray*}
hence
$$g_j=\sqrt{\ell}\,\cF^{-1}\left(\chi_{\ds \Omega}\, e^{2\pi i \ell j\cdot}\right)\Leftrightarrow
\cF(g_j)(\xi)=\sqrt{\ell}\,\chi_{\ds \Omega}(\xi)\, e^{2\pi i \ell j\xi}.$$
Therefore $g_j\in GPW_{\omega}(\Z).$
We compute
\begin{eqnarray*}
 \langle g_p,\,g_{j}\rangle_{\ell^2(\bZ)}&=&\langle \cF(g_p),\,\cF(g_{j})\rangle_{L^2(\mathbb{T})}
=\ell\,\int_{\mathbb{T}}\chi_{\ds \Omega}(\xi)\, e^{2\pi i \ell (p-j)\xi}\, d\xi=
\ell\,\int_{-1/2\ell}^{1/2\ell}e^{2\pi i \ell (p-j)\xi}\,d\xi\\ \\
&=&\int_{-1/2}^{1/2}e^{2\pi i  (p-j)\xi}\, d\xi=\delta_{p,j},
\end{eqnarray*}
therefore $\{g_j\}_{j\in\mathbb{Z}}$ is orthonormal.
To see that it is actually a basis it is sufficient to show \eqref{repker}, indeed
 by Theorem \ref{ShannotheoremZ}, and \eqref{ShannonZeq}, if $f\in GPW_\omega(\bZ),$
we have
$$
    f(m) = \sum_{j \in \mathbb{Z}} f(\ell j)\sinc(m/\ell - j),\quad m\in\bZ,
$$
so that $0=\langle f,\,g_{j}\rangle_{\ell^2(\bZ)}$ for all $j$ implies $f(\ell j)=0$ and
$f\equiv 0.$

Now
\begin{align*}
& \langle f,\,g_{j}\rangle_{\ell^2(\bZ)}=\frac{1}{\sqrt{\ell}}\,\sum_{m\in\mathbb{Z}}f(m)\,\sinc(m/\ell - j)=
\frac{1}{\sqrt{\ell}}\,\sum_{m\in\mathbb{Z}}\sum_{p \in \mathbb{Z}} f(\ell p)\sinc(m/\ell - p)\,\sinc(m/\ell - j)\\ \\
&=
\frac{1}{\sqrt{\ell}}\,\sum_{p \in \mathbb{Z}} f(\ell p)\sum_{m\in\mathbb{Z}}\sinc(m/\ell - p)\,\sinc(m/\ell - j)=
\sqrt{\ell}\,\sum_{p \in \mathbb{Z}} f(\ell p)\langle g_p,g_j\rangle_{\ds \ell^2(\bZ)}\\ \\
&=
\sqrt{\ell}\,\sum_{p \in \mathbb{Z}} f(\ell p)\,\delta_{p,j}
=
\sqrt{\ell}\,f(\ell j).
\end{align*}
\end{proof}

\section*{Acknowledgments}

The authors acknowledge the support of the ICTP--INdAM Collaborative Grants
and Research in Pairs Programme 2024, under which part of this project was
developed. The authors also thank the Abdus Salam International Centre for
Theoretical Physics (ICTP) for its support and hospitality, which made possible a fruitful collaboration between the Argentine
group at UBA--CONICET and the Italian INdAM research group.

IMB and SS were supported by the Italian MUR project PRIN 2022,  20227TRY8H ``TIme-varying signals on Graphs: REal and COmplex methods'' (TIGRECO). 
IMB was supported by the INdAM--GNCS Project, CUP E53C25002010001. This
research was carried out within RITA (Research ITalian network on
Approximation) and the UMI Group TAA (Approximation Theory and Applications). CC and UM acknowledge grants  PIP 202287/22 (CONICET), and UBACyT 2022-154 (UBA).

\section{Conflict of Interest}

The authors declare no conflict of interest.

\end{document}